\numberwithin{equation}{section}
\newtheorem{thm}{Theorem}[section]
\newtheorem{cor}[thm]{Corollary}
\newtheorem{prop}[thm]{Proposition}
\newtheorem{defn}[thm]{Definition}
\newtheorem{ques}[thm]{Question}
\newtheorem{con}[thm]{Conjecture}
\newcommand{\Hom}{\mbox{Hom}\,}
\newcommand{\Ext}{\mbox{Ext}\,}
\newcommand{\Tor}{\mbox{Tor}\,}
\newcommand{\Ass}{\mbox{Ass}\,}
\newcommand{\Supp}{\mbox{Supp}\,}
\newcommand{\gr}{\mbox{grade}\,}
\newcommand{\cd}{\mbox{cd}\,}
\newcommand{\im}{\mbox{Im}\,}
\newcommand{\lo}{\longrightarrow}
\newcommand{\ara}{\mbox{ara}}
\newcommand{\fa}{\mathfrak{a}}
\newcommand{\fb}{\mathfrak{b}}
\newcommand{\fm}{\mathfrak{m}}
\newcommand{\fp}{\mathfrak{p}}
\newcommand{\fc}{\mathfrak{c}}
\begin{document}
%%% ----------------------------------------------------------------------
%%% ----------------------------------------------------------------------
%%% ----------------------------------------------------------------------

\title[extension functors of local cohomology modules]
 {\large extension functors of local cohomology modules}

%%% ----------------------------------------------------------------------
%%% ----------------------------------------------------------------------
%%% ----------------------------------------------------------------------
\bibliographystyle{amsplain}
%%% ----------------------------------------------------------------------
%%% ----------------------------------------------------------------------
%%% ----------------------------------------------------------------------

    \author[M. Aghapournahr]{M. Aghapournahr$^{1}$}
\address{$^{1}$ Arak University, Beheshti St, P.O. Box: 879, Arak, Iran.}
\email{m-aghapour@araku.ac.ir}

    \author[A. J. Taherizadeh]{A. J. Taherizadeh$^{2}$}
\address{$^{2, 3}$ Faculty of Mathematical Sciences and Computer,
Tarbiat Moallem University, Tehran, Iran.}
\email{taheri@saba.tmu.ac.ir}

   \author[A. Vahidi]{Alireza Vahidi$^{3}$}
\address{$^{3}$ Payame Noor University (PNU), Iran.}
\email{vahidi.ar@gmail.com}

\keywords{Local cohomology modules, Serre subcategories, Cofinite
modules.}

\subjclass[2000]{13D45, 13D07.}

%%% ----------------------------------------------------------------------
%%% ----------------------------------------------------------------------
%%% ----------------------------------------------------------------------

\begin{abstract}
Let $R$ be a commutative Noetherian ring with non-zero identity,
$\fa$ an ideal of $R$, and $X$ an $R$--module. In this paper, for
fixed integers $s, t$ and a finite $\fa$--torsion $R$--module $N$,
we first study the membership of $\Ext^{s+t}_{R}(N, X)$ and
$\Ext^{s}_{R}(N, H^{t}_{\fa}(X))$ in Serre subcategories of the
category of $R$--modules. Then we present some conditions which
ensure the existence of an isomorphism between them. Finally, we
introduce the concept of Serre cofiniteness as a generalization of
cofiniteness and study this property for certain local cohomology
modules.
\end{abstract}

%%% ----------------------------------------------------------------------
%%% ----------------------------------------------------------------------
%%% ----------------------------------------------------------------------
\maketitle
%%% ----------------------------------------------------------------------
%%% ----------------------------------------------------------------------
%%% ----------------------------------------------------------------------

\section{Introduction}

%%% ----------------------------------------------------------------------
%%% ----------------------------------------------------------------------
%%% ----------------------------------------------------------------------
Throughout $R$ will denote a commutative Noetherian ring with
non-zero identity and $\fa$ an ideal of $R$. Also $N$ will be a
finite $\fa$--torsion module and $X$ an $R$--module. For unexplained
terminology from homological and commutative algebra we refer the
reader to \cite {BSh} and \cite {BH}.

The following conjecture was made by Grothendieck in \cite {G}.

\begin{con} For any ideal $\fa$ and finite $R$--module $X$, the
module $\emph{\Hom}_{R}(R/\fa, H^{n}_{\fa}(X))$ is finite for all
$n\geq 0$.
\end{con}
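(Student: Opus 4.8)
The plan is to pursue the most natural homological strategy and then to locate precisely where it must break down, because in the stated generality the assertion is in fact \emph{not} provable: it is the content of Hartshorne's celebrated counterexample that this conjecture is false. To expose the mechanism, I would first try to reduce the claim to the stronger property of \emph{$\fa$--cofiniteness}, namely that $\Ext^{i}_{R}(R/\fa, H^{n}_{\fa}(X))$ is finite for all $i\geq 0$; the case $i=0$ is exactly the desired finiteness of $\Hom_{R}(R/\fa, H^{n}_{\fa}(X))$. The standard tool is the Grothendieck spectral sequence associated to the composite of $\Gamma_{\fa}(-)$ with $\Hom_{R}(R/\fa,-)$. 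Since $\Gamma_{\fa}$ carries injectives to injectives, injectives are acyclic for $\Hom_{R}(R/\fa,-)$, and $\Hom_{R}(R/\fa,\Gamma_{\fa}(-))=\Hom_{R}(R/\fa,-)$, this spectral sequence takes the form
\[
E_{2}^{p,q}=\Ext^{p}_{R}(R/\fa, H^{q}_{\fa}(X)) \Longrightarrow \Ext^{p+q}_{R}(R/\fa, X).
\]
Because $R/\fa$ and $X$ are finite over the Noetherian ring $R$, the abutment $\Ext^{p+q}_{R}(R/\fa, X)$ consists of finite modules.

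Next I would try to propagate finiteness from the abutment to the interior by induction on $n$. The first rows are harmless: $H^{0}_{\fa}(X)=\Gamma_{\fa}(X)$ is a submodule of the finite module $X$, so $E_{2}^{p,0}$ is finite for every $p$, and the five--term exact sequence $0\to E_{2}^{1,0}\to \Ext^{1}_{R}(R/\fa,X)\to E_{2}^{0,1}\to E_{2}^{2,0}\to \Ext^{2}_{R}(R/\fa,X)$ sandwiches $E_{2}^{0,1}=\Hom_{R}(R/\fa,H^{1}_{\fa}(X))$ between finite modules, so the case $n=1$ always holds. One then hopes that, controlling the higher differentials $d_{r}\colon E_{r}^{0,n}\to E_{r}^{r,n-r+1}$, the edge term $E_{2}^{0,n}=\Hom_{R}(R/\fa, H^{n}_{\fa}(X))$ becomes finite once the lower rows are known to be finite. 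This is where one would invoke auxiliary hypotheses of Melkersson type, for instance that each $H^{q}_{\fa}(X)$ with $q<n$ is already cofinite, in order to close the induction.

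The main obstacle — and the reason no unconditional proof exists — is that finiteness of the abutment controls only the graded pieces $E_{\infty}^{p,q}$. On the edge $p=0$ the incoming differentials vanish, so $E_{\infty}^{0,n}$ is merely a \emph{submodule} of $E_{2}^{0,n}=\Hom_{R}(R/\fa, H^{n}_{\fa}(X))$; its finiteness therefore places no upper bound on $E_{2}^{0,n}$, and the inductive step genuinely fails for $n\geq 2$. Concretely, Hartshorne produced a complete local ring $R$ and an ideal $\fa$ for which $\Hom_{R}(R/\fa, H^{2}_{\fa}(R))$ is not finitely generated. The correct positive statement is thus a \emph{restricted} one: the argument above does succeed, yielding cofiniteness and in particular the finiteness of $\Hom_{R}(R/\fa, H^{n}_{\fa}(X))$, in the cases $\dim R/\fa\leq 1$ or $\fa$ principal, where the relevant local cohomology modules are known to be $\fa$--cofinite. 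It is precisely this gap between the finite abutment and the possibly non-finite $E_{2}$ terms that the paper's notion of \emph{Serre cofiniteness} is designed to measure, by replacing ``finite'' throughout with membership in a prescribed Serre subcategory.
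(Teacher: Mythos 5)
You are right that there is nothing to prove here: this is Grothendieck's conjecture, which the paper records only in order to note that Hartshorne disproved it, so the paper's own ``resolution'' of the statement is exactly the one you give, namely Hartshorne's counterexample (a complete local hypersurface $R$ and ideal $\fa$ with $\Hom_{R}(R/\fa,H^{2}_{\fa}(R))$ not finite). Your further analysis is also sound and matches the paper's machinery: the Grothendieck spectral sequence $E_{2}^{p,q}=\Ext^{p}_{R}(R/\fa,H^{q}_{\fa}(X))\Rightarrow \Ext^{p+q}_{R}(R/\fa,X)$ you write down is precisely the tool behind Theorems 2.1 and 2.3, your observation that the edge term $E_{2}^{0,n}$ is only bounded below by $E_{\infty}^{0,n}$ is the correct identification of where unconditional finiteness fails, and the auxiliary hypotheses you propose (cofiniteness of the lower $H^{q}_{\fa}(X)$, or membership in a Serre class) are exactly the conditional results the paper goes on to prove.
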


This conjecture is false in general as shown by Hartshorne in
\cite{Ha2}. However, he defined an $R$--module $X$ to be {\it
$\fa$--cofinite} if $\Supp_{R}(X)\subseteq V(\fa)$ and
$\Ext^{i}_{R}(R/\fa, X)$ is finite for each $i$, and he asked the
following question.

\begin{ques}
If $\fa$ is an ideal of $R$ and $X$ is a finite $R$--module when is
$\emph{\Ext}^{i}_{R}(R/\fa, H^{j}_{\fa}(X))$ finite for every $i$
and $j$?
\end{ques}

There are some attempts to show that under some conditions, for
fixed integers $s$ and $t$, the $R$--module $\Ext^{s}_{R}(R/\fa,
H^{t}_{\fa}(X))$ is finite, for example see \cite [Theorem 3.3]
{AKS}, \cite [Theorems A and B] {DY1}, \cite [Theorem 6.3.9] {DY2}
and \cite [Theorem 3.3] {KH}.

Recently, the first author and Melkersson in \cite{AM1} and
\cite{AM2}, and Asgharzadeh and Tousi in \cite{AT} approached the
study of local cohomology modules by means of Serre subcategories
and it is noteworthy that their approach enables us to deal with
several important problems on local cohomology modules
comprehensively. For more information, we refer the reader to \cite
{Hu} to see a survey of some important problems on finiteness,
vanishing, Artinianness, and finiteness of associated primes of
local cohomology modules.

In this paper, we study some properties of extension functors of
local cohomology modules by using Serre classes. Recall that a class
of $R$--modules is a {\it Serre subcategory} of the category of
$R$--modules when it is closed under taking submodules, quotients
and extensions. Always, $\mathcal{S}$ stands for a Serre subcategory
of the category of $R$--modules.

The crucial points of Section 2 are Theorems \ref {2.1} and \ref
{2.3} which show that when $R$--modules $\Ext^{s+t}_{R}(N, X)$ and
$\Ext^{s}_{R}(N, H^{t}_{\fa}(X))$ belong to $\mathcal{S}$. These two
theorems, which are frequently used through the paper, enable us to
demonstrate some new facts and improve some older facts about the
extension functors of local cohomology modules. We find the weakest
possible conditions for finiteness of associated primes of local
cohomology modules and, improve and give a new proof for \cite
[Theorem 3.3] {KH} in Corollaries \ref {2.5} and \ref {2.7}. The
relation between $R$--modules $\Ext^{s+t}_{R}(N, X)$ and
$\Ext^{s}_{R}(N, H^{t}_{\fa}(X))$ to be in a Serre subcategory of
the category of $R$--modules is shown in Corollary \ref {2.8}.

In Section 3, we first introduce the class of Melkersson subcategory
as a special case of Serre classes and next investigate the
extension functors of local cohomology modules in these
subcategories. In Propositions \ref {3.2}, \ref {3.3} and \ref
{3.4}, we give new proofs for \cite [Theorems 2.9 and 2.13] {AM1}
and study the membership of the local cohomology modules of an
$R$--module $X$ with respect to different ideals in Melkersson
subcategories. Our main result in this section is Theorem \ref {3.5}
which provides an isomorphism between the $R$--modules
$\Ext^{s+t}_{R}(N, X)$ and $\Ext^{s}_{R}(N, H^{t}_{\fa}(X))$.
Corollaries \ref {3.6} through \ref {3.9} are some applications of
this theorem.

In Section 4, we present a generalization of the concept of
cofiniteness with respect to an ideal to Serre subcategories of the
category of $R$--modules. Theorems \ref {4.2}, \ref {4.4} and \ref
{4.6} generalize \cite [Proposition 2.5] {MV}, \cite [Proposition
3.11] {Mel}, \cite [Theorem 3.1] {DM2}, \cite [Theorems A and B]
{DY1} and \cite [Corollary 2.7]{DM1}. The Change of ring principle
for Serre cofiniteness is presented in Theorem \ref {4.8}. We also
give a proposition about $\fa$--cofinite minimax local cohomology
modules in Proposition \ref {4.10}. Corollaries \ref {4.11} and \ref
{4.12} are immediate results of this proposition where Corollary
\ref {4.11} improves \cite [Theorem 2.3] {BN}.

%%% ----------------------------------------------------------------------
%%% ----------------------------------------------------------------------
%%% ----------------------------------------------------------------------

\section{Local cohomology modules and Serre subcategories}

%%% ----------------------------------------------------------------------
%%% ----------------------------------------------------------------------
%%% ----------------------------------------------------------------------
Let $\fa$ be an ideal of $R$, $N$ a finite $\fa$--torsion module and
$s, t$ non-negative integers. In this section, we present sufficient
conditions which convince us the $R$--modules $\Ext^{t}_{R}(N ,X)$
and $\Ext^{s}_{R}(N, H^{t}_{\fa}(X))$ are in a Serre subcategory of
the category of $R$--modules. Even though we can provide elementary
proofs by using induction for our main theorems, for shortening the
proofs we use spectral sequences argument.

%%% ----------------------------------------------------------------------

%%% ----------------------------------------------------------------------

%   {2.1}   ----------------------------------------------------------------------  {2.1}
\begin{thm} \label {2.1} Let $X$ be an $R$--module and $t$ be a non-negative integer
such that $\emph{\Ext}^{t-r}_{R}(N, H^{r}_{\fa}(X))$ is in
$\mathcal{S}$ for all $r$, $0\leq r\leq t.$ Then
$\emph{\Ext}^{t}_{R}(N, X)$ is in $\mathcal{S}.$
\end{thm}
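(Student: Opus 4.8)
The plan is to realize the two families of modules appearing in the statement as the $E_2$-page and the abutment of a single Grothendieck spectral sequence, and then to extract the conclusion purely from the closure axioms of a Serre subcategory. The starting observation is that since $N$ is $\fa$--torsion, every homomorphism out of $N$ factors through the $\fa$--torsion part of its target, so $\Hom_{R}(N, M) = \Hom_{R}(N, \G_{\fa}(M))$ for every $R$--module $M$. Thus the left exact functor $\Hom_{R}(N, -)$ is the composite of $\G_{\fa}$ followed by $\Hom_{R}(N, -)$. As $R$ is Noetherian, $\G_{\fa}$ carries injective modules to injective modules, and injectives are acyclic for $\Hom_{R}(N, -)$; hence the composite-functor (Grothendieck) theorem applies and produces a first-quadrant cohomological spectral sequence
$$E^{p,q}_{2} = \Ext^{p}_{R}(N, H^{q}_{\fa}(X)) \underset{p}{\Longrightarrow} \Ext^{p+q}_{R}(N, X).$$

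Next I would read off the relevant entries. Setting $p = t - r$ and $q = r$, the terms $E^{t-r,\,r}_{2} = \Ext^{t-r}_{R}(N, H^{r}_{\fa}(X))$ for $0 \leq r \leq t$ are precisely the $E_{2}$--terms lying on the total-degree-$t$ antidiagonal, and the hypothesis is exactly the assertion that all of them belong to $\mathcal{S}$. Because each page $E_{r+1}$ is formed as a subquotient ($\Ker$ modulo $\Im$ of differentials) of the preceding page, every limit term $E^{t-r,\,r}_{\infty}$ is a subquotient of $E^{t-r,\,r}_{2}$; since $\mathcal{S}$ is closed under passing to submodules and quotients, it follows that $E^{t-r,\,r}_{\infty} \in \mathcal{S}$ for all $0 \leq r \leq t$.

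Finally I would invoke convergence. The abutment $\Ext^{t}_{R}(N, X)$ carries a finite filtration whose successive quotients are exactly the terms $E^{t-r,\,r}_{\infty}$ with $0 \leq r \leq t$. Climbing this filtration one step at a time and using that $\mathcal{S}$ is closed under extensions, I conclude that $\Ext^{t}_{R}(N, X) \in \mathcal{S}$, as required.

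The substantive point is setting up the spectral sequence correctly: one must verify both that $\G_{\fa}$ preserves injectivity (so that the composite-functor hypotheses hold) and that the composite abuts to $\Ext^{p+q}_{R}(N, -)$ rather than to some other derived functor, which is precisely where the $\fa$--torsion identification $\Hom_{R}(N, -) = \Hom_{R}(N, \G_{\fa}(-))$ is used. Once the spectral sequence is in hand, the remainder is a routine filtration argument driven entirely by the three Serre closure properties, so I expect no further difficulty there. (As the authors note, an alternative induction on $t$ avoiding spectral sequences is possible, but the spectral sequence keeps the bookkeeping shortest.)
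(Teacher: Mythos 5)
Your proof is correct and follows essentially the same route as the paper: the same Grothendieck spectral sequence $E^{p,q}_{2}=\Ext^{p}_{R}(N,H^{q}_{\fa}(X))\Rightarrow \Ext^{p+q}_{R}(N,X)$, the observation that each $E^{t-r,\,r}_{\infty}$ is a subquotient of $E^{t-r,\,r}_{2}$ (the paper makes the stabilization explicit as $E^{t-r,\,r}_{\infty}=E^{t-r,\,r}_{t+2}$), and the finite filtration of the abutment climbed via closure under extensions. The only difference is cosmetic: you justify the existence of the spectral sequence from the composite-functor theorem and the identification $\Hom_{R}(N,-)=\Hom_{R}(N,\Gamma_{\fa}(-))$, where the paper simply cites Rotman.
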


\begin{proof} By \cite [Theorem 11.38]{Rot}, there is a Grothendieck spectral sequence
$$E^{p, q}_{2}:= \Ext^{p}_{R}(N, H^{q}_{\frak{a}}(X))
_{\stackrel{\Longrightarrow}{p}} \Ext^{p +q}_{R}(N, X).$$ For all
$r$, $0\leq r\leq t$, we have $E_{\infty}^{t-r, r}= E_{t+2}^{t-r,
r}$ since $E_{i}^{t-r-i, r+i-1}= 0= E_{i}^{t-r+i, r+1-i}$ for all
$i\geq t+2$; so that $E_{\infty}^{t-r, r}$ is in $\mathcal{S}$ from
the fact that $E_{t+2}^{t-r, r}$ is a subquotient of $E_{2}^{t-r,
r}$ which is in $\mathcal{S}$ by assumption. There exists a finite
filtration
$$0= \phi^{t+1}H^{t}\subseteq \phi^{t}H^{t}\subseteq \cdots
 \subseteq \phi^{1}H^{t}\subseteq \phi^{0}H^{t}= \Ext^{t}_{R}(N,X)$$
 such that $E_{\infty}^{t-r, r}= \phi^{t-r}H^{t}/\phi^{t-r+1}H^{t}$ for
 all $r$, $0\leq r\leq t$. Now the exact sequences
 $$0\longrightarrow \phi^{t-r+1}H^{t}\longrightarrow \phi^{t-r}H^{t}\longrightarrow
E_{\infty}^{t-r, r}\longrightarrow 0,$$ for
 all $r$, $0\leq r\leq t,$ yield the assertion.
\end{proof}

%%% ----------------------------------------------------------------------

%%% ----------------------------------------------------------------------

%   {2.2}   ----------------------------------------------------------------------  {2.2}
Recall that, an $R$--module $X$ is said to be {\it weakly Laskerian}
if the set of associated primes of any quotient module of $X$ is
finite (see \cite [Definition 2.1]{DM1}). Also, we say that $X$ is
{\it $\fa$--weakly cofinite} if $\Supp_{R}(X)\subseteq V(\fa)$ and
$\Ext^{i}_{R}(R/\fa, X)$ is weakly Laskerian for all $i\geq 0$ (see
\cite [Definition 2.4]{DM2}). We denote the category of $R$--modules
(resp. the category of finite $R$--modules, the category of weakly
Laskerian $R$--modules) by $\mathcal{C}(R)$ (resp.
$\mathcal{C}_{f.g}(R)$, $\mathcal{C}_{w.l}(R)$).

\begin{cor} \label {2.2} {\rm (cf.} \cite [Theorem 6.3.9(i)]{DY2}{\rm )}
Let $X$ be an $R$--module and $n$ be a non-negative integer such
that for all $r$, $0\leq r\leq n,$ $\emph{\Ext}^{n-r}_{R}(N,
H^{r}_{\fa}(X))$ is weakly Laskerian {\rm(}resp. finite{\rm)}. Then
$\emph{\Ext}^{n}_{R}(N, X)$ is weakly Laskerian {\rm(}resp.
finite{\rm)} and so $\emph{\Ass}_{R}(Ext^{n}_{R}(N ,X))$ is finite.
\end{cor}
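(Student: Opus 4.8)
The plan is to derive Corollary \ref{2.2} as a direct application of Theorem \ref{2.1}, by specializing the Serre subcategory $\mathcal{S}$ to the two relevant concrete choices. The key observation is that both $\mathcal{C}_{w.l}(R)$, the category of weakly Laskerian $R$--modules, and $\mathcal{C}_{f.g}(R)$, the category of finite $R$--modules, are Serre subcategories of $\mathcal{C}(R)$: each is closed under taking submodules, quotients, and extensions. For finite modules this is standard; for weakly Laskerian modules it follows from \cite{DM1} (one checks the defining property that every quotient has finitely many associated primes is inherited by submodules, quotients, and survives in extensions via the usual short exact sequence argument on associated primes).

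\medskip

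First I would fix the Serre subcategory $\mathcal{S}$ to be $\mathcal{C}_{w.l}(R)$ (respectively $\mathcal{C}_{f.g}(R)$). The hypothesis of the corollary is exactly that $\Ext^{n-r}_{R}(N, H^{r}_{\fa}(X))$ lies in this $\mathcal{S}$ for all $r$ with $0 \leq r \leq n$. This is precisely the hypothesis of Theorem \ref{2.1} with $t = n$. Therefore Theorem \ref{2.1} immediately yields that $\Ext^{n}_{R}(N, X)$ is in $\mathcal{S}$, i.e., is weakly Laskerian (respectively finite).

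\medskip

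For the final assertion about associated primes, I would invoke the defining property of weakly Laskerian modules directly: since $\Ext^{n}_{R}(N, X)$ is weakly Laskerian, in particular the set $\Ass_{R}(\Ext^{n}_{R}(N, X))$ is finite (it is its own quotient). In the finite case, finiteness of the set of associated primes is the standard fact that finite modules over a Noetherian ring have finitely many associated primes, and every finite module is weakly Laskerian, so this case is subsumed.

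\medskip

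There is essentially no hard step here: the entire content of the corollary is carried by Theorem \ref{2.1}. The only point requiring a moment's care is verifying that the two specified classes genuinely form Serre subcategories so that Theorem \ref{2.1} applies; since these verifications are either standard or cited from \cite{DM1}, the obstacle is negligible. The proof is a clean instantiation rather than a new argument.
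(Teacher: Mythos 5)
Your proposal is correct and matches the paper's intent exactly: the paper gives no written proof for Corollary \ref{2.2} precisely because it is meant as an immediate instantiation of Theorem \ref{2.1} with $\mathcal{S}=\mathcal{C}_{w.l}(R)$ (resp.\ $\mathcal{C}_{f.g}(R)$), which is what you do. Your added remarks---checking that these two classes are Serre subcategories and extracting finiteness of $\Ass_{R}(\Ext^{n}_{R}(N,X))$ from the weakly Laskerian (resp.\ finite) property---are exactly the routine verifications the paper leaves implicit.
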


%%% ----------------------------------------------------------------------

%%% ----------------------------------------------------------------------

%  {2.3}   ----------------------------------------------------------------------  {2.3}
The next theorem is related to the $R$--module $\Ext^{s}_{R}(N,
H^{t}_{\fa}(X))$ to be in a Serre subcategory of the category of
$R$--modules.

\begin{thm} \label {2.3} Let $X$ be an $R$--module and $s, t$ be non-negative integers such that
               \begin{itemize}
                   \item[(i)]{$\emph{\Ext}^{s+ t}_{R}(N, X)$ is in $ \mathcal{S}$,}
                   \item[(ii)]{$\emph{\Ext}^{s+ t+ 1- i}_{R}(N, H^{i}_{\fa}(X))$ is in $\mathcal{S}$ for all $i$, $0\leq i< t,$ and}
                   \item[(iii)]{$\emph{\Ext}^{s+ t- 1- i}_{R}(N, H^{i}_{\fa}(X))$ is in $\mathcal{S}$ for all $i$, $t+ 1\leq i< s+ t.$}
               \end{itemize}
Then $\emph{\Ext}^{s}_{R}(N, H^{t}_{\fa}(X))$ is in $\mathcal{S}$.
\end{thm}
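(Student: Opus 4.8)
The plan is to exploit the same Grothendieck spectral sequence
\[
E^{p,q}_{2} = \Ext^{p}_{R}(N, H^{q}_{\fa}(X)) \Longrightarrow \Ext^{p+q}_{R}(N, X)
\]
used in Theorem \ref{2.1}, but now to read off a single $E_{2}$-term instead of the abutment. The module we are after is exactly $E_{2}^{s,t} = \Ext^{s}_{R}(N, H^{t}_{\fa}(X))$, so I would fix attention on the spot $(s,t)$ and run a \emph{downward} induction on the page number $r$, starting from a page on which the sequence has already stabilized at $(s,t)$ and descending to $r = 2$.

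First I would set up the base of the induction. Since $\Ext^{s+t}_{R}(N, X)$ lies in $\mathcal{S}$ by (i) and carries a finite filtration whose successive quotients are the $E_{\infty}^{p,q}$ with $p + q = s + t$, the factor $E_{\infty}^{s,t}$ is a subquotient of a module in $\mathcal{S}$ and hence lies in $\mathcal{S}$. Moreover, for $r \geq \max\{s + 1,\, t + 2\}$ the incoming differential $d_{r} \colon E_{r}^{s-r,\, t+r-1} \to E_{r}^{s,t}$ has vanishing source and the outgoing differential $d_{r} \colon E_{r}^{s,t} \to E_{r}^{s+r,\, t-r+1}$ has vanishing target, because in each case the homological or local cohomology degree becomes negative; hence on such a page $E_{r}^{s,t} = E_{\infty}^{s,t} \in \mathcal{S}$.

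For the inductive step I would assume $E_{r+1}^{s,t} \in \mathcal{S}$ and recover $E_{r}^{s,t} \in \mathcal{S}$. Writing $Z = \Ker(d_{r}^{\mathrm{out}})$ and $B = \Im(d_{r}^{\mathrm{in}})$, one has $B \subseteq Z \subseteq E_{r}^{s,t}$ with $Z/B = E_{r+1}^{s,t}$, together with the short exact sequences $0 \to Z \to E_{r}^{s,t} \to \Im(d_{r}^{\mathrm{out}}) \to 0$ and $0 \to B \to Z \to E_{r+1}^{s,t} \to 0$. Here $\Im(d_{r}^{\mathrm{out}})$ is a submodule of $E_{r}^{s+r,\, t-r+1}$ and $B$ is a quotient of $E_{r}^{s-r,\, t+r-1}$; both $E_{r}$-terms are subquotients of the corresponding $E_{2}$-terms, which the index bookkeeping identifies as $\Ext^{s+t+1-i}_{R}(N, H^{i}_{\fa}(X))$ with $0 \leq i < t$ (covered by (ii), or zero when the index is out of range) and $\Ext^{s+t-1-i}_{R}(N, H^{i}_{\fa}(X))$ with $t+1 \leq i < s+t$ (covered by (iii), or zero), respectively. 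Hence $\Im(d_{r}^{\mathrm{out}})$ and $B$ lie in $\mathcal{S}$, and since $\mathcal{S}$ is closed under extensions the two exact sequences give first $Z \in \mathcal{S}$ and then $E_{r}^{s,t} \in \mathcal{S}$. Descending to $r = 2$ yields $\Ext^{s}_{R}(N, H^{t}_{\fa}(X)) = E_{2}^{s,t} \in \mathcal{S}$.

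I expect the main obstacle to be the reversal of direction relative to Theorem \ref{2.1}: a Serre subcategory is not detected by subquotients, so knowing $E_{\infty}^{s,t} \in \mathcal{S}$ does not by itself let one climb back up the pages. The crux is that at every page one must simultaneously control \emph{both} neighbours of the spot $(s,t)$ — the target of the outgoing differential and the source of the incoming one — in order to invoke extension-closure, and this is precisely what forces the two separate families of hypotheses (ii) and (iii). The remaining care is purely combinatorial: matching those neighbours with the modules appearing in (ii) and (iii), and checking that the out-of-range terms vanish so that the downward induction has a genuine stable starting page.
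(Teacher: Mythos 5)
Your proposal is correct and is essentially the paper's own proof: the same Grothendieck spectral sequence, the same downward induction on the page number via the two short exact sequences involving $Z_r^{s,t}$ and $B_r^{s,t}$ (with hypotheses (ii) and (iii) controlling the source and target of the differentials at $(s,t)$), and the same base case obtained from hypothesis (i) and the finite filtration of $\Ext^{s+t}_{R}(N,X)$ whose factor at the right spot is $E_{\infty}^{s,t}$. Your only deviation is the slightly sharper stabilization bound $r\geq\max\{s+1,\,t+2\}$ in place of the paper's $r\geq s+t+2$, which is immaterial.
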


\begin{proof} Consider the Grothendieck spectral sequence
$$E^{p, q}_{2}:= \Ext^{p}_{R}(N, H^{q}_{\frak{a}}(X))
_{\stackrel{\Longrightarrow}{p}} \Ext^{p+ q}_{R}(N, X).$$ For all
$r\geq 2$, let $Z_{r}^{s, t}= \ker(E_{r}^{s, t}\longrightarrow
E_{r}^{s+r, t+1-r})$ and $B_{r}^{s, t}= \im(E_{r}^{s-r,
t+r-1}\longrightarrow E_{r}^{s, t})$. We have the exact sequences:
$$0\longrightarrow Z_{r}^{s, t}\longrightarrow E_{r}^{s, t}\longrightarrow E_{r}^{s, t}/Z_{r}^{s, t}\longrightarrow 0$$
and
$$0\longrightarrow B_{r}^{s ,t}\longrightarrow Z_{r}^{s, t}\longrightarrow E_{r+1}^{s, t}\longrightarrow 0.$$
Since, by assumptions (ii) and (iii), $E_{2}^{s+r, t+1-r}$ and
$E_{2}^{s-r, t+r-1}$ are in $\mathcal{S}$, $E_{r}^{s+r, t+1-r}$ and
$E_{r}^{s-r, t+r-1}$ are also in $\mathcal{S}$, and so $E_{r}^{s,
t}/Z_{r}^{s, t}$ and $B_{r}^{s, t}$ are in $\mathcal{S}$. It shows
that $E_{r}^{s, t}$ is in $\mathcal{S}$ whenever $E_{r+1}^{s ,t}$ is
in $\mathcal{S}$.

We have $E_{r}^{s-r, t+r-1}= 0= E_{r}^{s+r, t+1-r}$ for all $r$,
$r\geq t+s+2$. Therefore we obtain $E_{t+s+2}^{s, t}= E_{\infty}^{s,
t}$. To complete the proof, it is enough to show that
$E_{\infty}^{s, t}$ is in $\mathcal{S}$. There exists a finite
filtration
$$0= \phi^{s+t+1}H^{s+t}\subseteq \phi^{s+t}H^{s+t}\subseteq \cdots
 \subseteq \phi^{1}H^{s+t}\subseteq \phi^{0}H^{s+t}= \Ext^{s+t}_{R}(N, X)$$
such that $E_{\infty}^{s+t-j ,j}=
\phi^{s+t-j}H^{s+t}/\phi^{s+t-j+1}H^{s+t}$ for all $j$, $0\leq j\leq
s+t$. Since $\Ext^{s+t}_{R}(N, X)$ is in $\mathcal{S}$,
$\phi^{s}H^{s+t}$ is in $\mathcal{S}$ and so $E_{\infty}^{s, t}=
\phi^{s}H^{s+t}/\phi^{s+ 1}H^{s+t}$ is in $\mathcal{S}$ as we
desired.
\end{proof}

%%% ----------------------------------------------------------------------

%%% ----------------------------------------------------------------------

%  {2.4}   ----------------------------------------------------------------------  {2.4}
\begin{cor} \label {2.4} {\rm(cf.} \cite [Theorem 2.2]{AT}{\rm)}
Suppose that $X$ is an $R$--module and $n$ is a non-negative integer
such that
               \begin{itemize}
                   \item[(i)]{$\emph{\Ext}^{n}_{R}(N, X)$ is in $\mathcal{S}$, and}
                   \item[(ii)]{$\emph{\Ext}^{n+1-i}_{R}(N, H^{i}_{\fa}(X))$ is in $\mathcal{S}$ for all $i$, $0\leq i< n.$}
               \end{itemize}
Then $\emph{\Hom}_{R}(N, H^{n}_{\fa}(X))$ is in $\mathcal{S}.$
\end{cor}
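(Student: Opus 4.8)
The plan is to derive this statement as the special case $s = 0$, $t = n$ of Theorem~\ref{2.3}. First I would observe that $\Hom_{R}(N, H^{n}_{\fa}(X))$ is precisely $\Ext^{0}_{R}(N, H^{n}_{\fa}(X))$, so that upon setting $s = 0$ and $t = n$ the conclusion of Theorem~\ref{2.3}, namely that $\Ext^{s}_{R}(N, H^{t}_{\fa}(X))$ is in $\mathcal{S}$, coincides exactly with the assertion we wish to prove. Thus the whole corollary reduces to checking that, for these parameter values, the three hypotheses of Theorem~\ref{2.3} are met.

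Next I would match the hypotheses one by one. Hypothesis (i) of the theorem reads $\Ext^{s+t}_{R}(N, X) = \Ext^{n}_{R}(N, X)$ is in $\mathcal{S}$, which is literally hypothesis (i) of the corollary. Hypothesis (ii) of the theorem reads $\Ext^{s+t+1-i}_{R}(N, H^{i}_{\fa}(X)) = \Ext^{n+1-i}_{R}(N, H^{i}_{\fa}(X))$ is in $\mathcal{S}$ for all $i$ with $0\leq i < t = n$, which is precisely hypothesis (ii) of the corollary.

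The only point that requires a moment's attention is hypothesis (iii) of Theorem~\ref{2.3}, which imposes a condition on the indices $i$ satisfying $t+1 \leq i < s+t$. For $s = 0$ and $t = n$ this range becomes $n+1 \leq i < n$, which is empty; hence hypothesis (iii) is vacuously satisfied and imposes no restriction whatsoever. With all three hypotheses thereby verified, Theorem~\ref{2.3} applies and yields that $\Ext^{0}_{R}(N, H^{n}_{\fa}(X)) = \Hom_{R}(N, H^{n}_{\fa}(X))$ is in $\mathcal{S}$, as desired.

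I expect no substantive obstacle in this argument: the genuine content lies entirely in Theorem~\ref{2.3} and its underlying spectral sequence bookkeeping, while the corollary is obtained purely by specialization. The one thing worth stating explicitly is the collapse of the third hypothesis to a vacuous condition when $s = 0$, since a reader might otherwise wonder why condition (iii) of the theorem does not reappear among the hypotheses of the corollary.
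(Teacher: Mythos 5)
Your proposal is correct and is exactly the paper's own proof: the paper proves Corollary~\ref{2.4} by applying Theorem~\ref{2.3} with $s=0$ and $t=n$. Your additional observations---that $\Hom_{R}(N,H^{n}_{\fa}(X))=\Ext^{0}_{R}(N,H^{n}_{\fa}(X))$ and that hypothesis (iii) of the theorem becomes vacuous since the range $n+1\leq i<n$ is empty---are accurate and simply make explicit what the paper leaves to the reader.
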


\begin{proof}
Apply Theorem \ref {2.3} with $s=0$ and $t= n$.
\end{proof}

%%% ----------------------------------------------------------------------

%%% ----------------------------------------------------------------------

%  {2.5}   ----------------------------------------------------------------------  {2.5}
We can deduce from the above corollary the main results of \cite
[Theorem B]{KS}, \cite [Theorem 2.2]{BL}, \cite [Theorem 5.6]{N},
\cite [Corollary 2.7]{DM1}, \cite [Theorem 6.3.9(ii)]{DY2}, \cite
[Theorem 2.3]{BSS}, \cite [Corollary 3.2]{DN}, \cite [Corollary
2.3]{BSY} and \cite [Lemma 2.2]{BN} concerning the finiteness of
associated primes of local cohomology modules. We just state the
weakest possible conditions which yield the finiteness of associated
primes of local cohomology modules in the next corollary.

\begin{cor} \label {2.5} Suppose that $X$ is an $R$--module and $n$ is a non-negative integer
such that
               \begin{itemize}
                   \item[(i)]{$\emph{\Ext}^{n}_{R}(R/\fa, X)$ is weakly Laskerian, and}
                   \item[(ii)]{$\emph{\Ext}^{n+1-i}_{R}(R/\fa, H^{i}_{\fa}(X))$ is weakly Laskerian  for all $i$, $0\leq i< n.$}
               \end{itemize}
Then $\emph{\Hom}_{R}(R/\fa, H^{n}_{\fa}(X))$ is weakly Laskerian,
and so $\emph{\Ass}_{R}(H^{n}_{\fa}(X))$  is finite.
\end{cor}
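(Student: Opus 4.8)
The plan is to recognize this corollary as the specialization of the already-established Corollary \ref{2.4} to a concrete Serre subcategory, followed by a standard translation of the membership conclusion into a statement about associated primes. Specifically, I would take $\mathcal{S}$ to be the category $\mathcal{C}_{w.l}(R)$ of weakly Laskerian $R$--modules and choose $N = R/\fa$. First I would confirm that this choice is admissible: $\mathcal{C}_{w.l}(R)$ is a Serre subcategory (it is closed under submodules, quotients and extensions, as recorded in \cite{DM1}), and $R/\fa$ is a finite $\fa$--torsion module, being finitely generated and annihilated by $\fa$, so it is a legitimate instance of the standing module $N$ of Section 2. Under these identifications, hypotheses (i) and (ii) of the present corollary coincide verbatim with hypotheses (i) and (ii) of Corollary \ref{2.4}.

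Applying Corollary \ref{2.4} then gives immediately that $\Hom_{R}(R/\fa, H^{n}_{\fa}(X))$ belongs to $\mathcal{S} = \mathcal{C}_{w.l}(R)$, that is, it is weakly Laskerian. For the finiteness of associated primes I would invoke the standard identity $\Ass_{R}(\Hom_{R}(R/\fa, M)) = \Ass_{R}(M) \cap V(\fa)$, valid for any $R$--module $M$ (it reflects that $\Hom_{R}(R/\fa, M) \cong (0 :_{M} \fa)$, whose associated primes are exactly those of $M$ containing $\fa$). Taking $M = H^{n}_{\fa}(X)$ and using that $H^{n}_{\fa}(X)$ is $\fa$--torsion, so that $\Ass_{R}(H^{n}_{\fa}(X)) \subseteq V(\fa)$, the intersection with $V(\fa)$ becomes redundant and I obtain $\Ass_{R}(H^{n}_{\fa}(X)) = \Ass_{R}(\Hom_{R}(R/\fa, H^{n}_{\fa}(X)))$. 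Since a weakly Laskerian module has only finitely many associated primes (being a quotient of itself, by the very definition of weak Laskerianness), the right-hand side is finite, whence $\Ass_{R}(H^{n}_{\fa}(X))$ is finite.

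The main obstacle, and really the only nonformal point, is the bookkeeping that makes the reduction legitimate: verifying that weakly Laskerian modules form a Serre subcategory (so that the spectral-sequence machinery underlying Corollary \ref{2.4} applies) and correctly converting the membership conclusion into the associated-prime conclusion through the $\Ass$--$\Hom$ identity together with the $\fa$--torsion property of $H^{n}_{\fa}(X)$. Once these two ingredients are in place, the corollary follows as a direct consequence of Corollary \ref{2.4} with no further computation.
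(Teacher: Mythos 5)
Your proposal is correct and follows exactly the paper's own argument: apply Corollary \ref{2.4} with $N = R/\fa$ and $\mathcal{S} = \mathcal{C}_{w.l}(R)$, then use the identity $\Ass_{R}(\Hom_{R}(R/\fa, H^{n}_{\fa}(X))) = V(\fa) \cap \Ass_{R}(H^{n}_{\fa}(X)) = \Ass_{R}(H^{n}_{\fa}(X))$ together with the finiteness of associated primes of weakly Laskerian modules. You merely spell out the verification steps (that $\mathcal{C}_{w.l}(R)$ is Serre, that $R/\fa$ is a finite $\fa$--torsion module, and that $H^{n}_{\fa}(X)$ is $\fa$--torsion) which the paper leaves implicit.
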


\begin{proof}
Apply Corollary \ref {2.4} with $N= R/\fa$ and $\mathcal{S}=
\mathcal{C}_{w.l}(R)$, and note that we have the equality
$\Ass_{R}(\Hom_{R}(R/\fa, H^{n}_{\fa}(X)))= V(\fa)\cap
\Ass_{R}(H^{n}_{\fa}(X))= \Ass_{R}(H^{n}_{\fa}(X))$.
\end{proof}

%%% ----------------------------------------------------------------------

%%% ----------------------------------------------------------------------

%  {2.6}  ----------------------------------------------------------------------  {2.6}
It is easy to see that if $R$ is a local ring and $\mathcal{S}$ is a
non-zero Serre subcategory of the category of $R$--modules, then
every $R$--module with finite length belongs to $\mathcal{S}$.

\begin{cor} \label {2.6} {\rm(cf.} \cite [Theorem 2.12]{AT}{\rm)}
Let $R$ be a local ring with maximal ideal $\fm$ and $X$ be an
$R$--module. Assume also that $\mathcal{S}$ is a non-zero Serre
subcategory of $\mathcal{C}(R)$ and $n$ is a non-negative integer
such that
               \begin{itemize}
                   \item[(i)]{$\emph{\Ext}^{n}_{R}(R/\fm, X)$ is finite, and}
                   \item[(ii)]{$\emph{\Ext}^{n+1-i}_{R}(R/\fm ,H^{i}_{\fa}(X))$ is in $\mathcal{S}$ for all $i$, $0\leq i< n.$}
               \end{itemize}
Then $\emph{\Hom}_{R}(R/\fm, H^{n}_{\fa}(X))$ is in $\mathcal{S}$.
\end{cor}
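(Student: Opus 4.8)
The plan is to reduce this to Corollary \ref{2.4} by choosing $N = R/\fm$, after first upgrading the finiteness hypothesis (i) to the stronger statement that $\Ext^{n}_{R}(R/\fm, X)$ lies in $\mathcal{S}$. Before invoking Corollary \ref{2.4}, I would check that $R/\fm$ is an admissible choice for $N$, i.e.\ that it is a finite $\fa$--torsion module. Finiteness is immediate, and since $R$ is local with maximal ideal $\fm$, any proper ideal $\fa$ satisfies $\fa \subseteq \fm$, so $\Supp_{R}(R/\fm) = \{\fm\} \subseteq V(\fa)$; hence $R/\fm$ is $\fa$--torsion (the case $\fa = R$ being trivial).

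The key step, and the only real content of the argument, is the bridge from finiteness to membership in $\mathcal{S}$. Here I would observe that $\fm$ annihilates $R/\fm$, so by functoriality of $\Ext$ it annihilates $\Ext^{n}_{R}(R/\fm, X)$ as well; thus this module is naturally a vector space over the residue field $R/\fm$. Being finitely generated by hypothesis (i), it is a finite--dimensional vector space over $R/\fm$, and therefore an $R$--module of finite length. Since $\mathcal{S}$ is a non-zero Serre subcategory and $R$ is local, the remark preceding the corollary — that every finite--length $R$--module then belongs to $\mathcal{S}$ — gives at once that $\Ext^{n}_{R}(R/\fm, X)$ is in $\mathcal{S}$.

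With this in hand the hypotheses of Corollary \ref{2.4} are met for $N = R/\fm$: condition (i) of that corollary is exactly the membership just established, and its condition (ii) is verbatim our hypothesis (ii). Applying Corollary \ref{2.4} then yields that $\Hom_{R}(R/\fm, H^{n}_{\fa}(X))$ is in $\mathcal{S}$, which is the desired conclusion. I do not anticipate any serious obstacle: once the finite--length observation is in place, everything else is a direct citation of Corollary \ref{2.4}, so the proof should be short.
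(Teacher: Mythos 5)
Your proposal is correct and follows the paper's own proof exactly: the paper likewise notes that since $\mathcal{S}\neq 0$, the finite module $\Ext^{n}_{R}(R/\fm, X)$ (being annihilated by $\fm$, hence of finite length) lies in $\mathcal{S}$ by the remark preceding the corollary, and then applies Corollary \ref{2.4} with $N=R/\fm$. Your additional checks — that $R/\fm$ is a finite $\fa$--torsion module because $\fa\subseteq\fm$ in a local ring, and the disposal of the trivial case $\fa=R$ — are details the paper leaves implicit, and they are handled correctly.
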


\begin{proof} Since $\mathcal{S}\neq 0$, $\Ext^{n}_{R}(R/\fm, X)$ is in
$\mathcal{S}$. Now, the assertion follows from Corollary \ref {2.4}.
\end{proof}

%%% ----------------------------------------------------------------------

%%% ----------------------------------------------------------------------

%  {2.7}   ----------------------------------------------------------------------  {2.7}
Khashayarmanesh, in \cite [Theorem 3.3]{KH}, by using the concept of
$\fa$--filter regular sequence, proved the following corollary with
stronger assumptions. His assumptions were $X$ is a finite
$R$--module with finite Krull dimension and $N= R/\fb,$ where $\fb$
is an ideal of $R$ contains $\fa$, while it is a simple conclusion
of Theorem \ref {2.3} for an arbitrary $R$--module $X$ and a finite
$\fa$--torsion module $N.$

\begin{cor} \label {2.7} {\rm(cf.} \cite [Theorem 3.3]{KH}{\rm)}
Suppose that $X$ is an $R$--module and $s, t$ are non-negative
integers such that
               \begin{itemize}
                   \item[(i)]{$\emph{\Ext}^{s+ t}_{R}(N, X)$ is finite,}
                   \item[(ii)]{$\emph{\Ext}^{s+ t+ 1- i}_{R}(N, H^{i}_{\fa}(X))$ is finite for all $i$, $0\leq i< t,$ and}
                   \item[(iii)]{$\emph{\Ext}^{s+ t- 1- i}_{R}(N, H^{i}_{\fa}(X))$ is finite for all $i$, $t+ 1\leq i< s+ t.$}
               \end{itemize}
Then $\emph{\Ext}^{s}_{R}(N, H^{t}_{\fa}(X))$ is finite.
\end{cor}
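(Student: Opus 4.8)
The plan is to recognize Corollary \ref{2.7} as nothing more than the special case of Theorem \ref{2.3} obtained by taking $\mathcal{S}$ to be the category $\mathcal{C}_{f.g}(R)$ of finite $R$--modules. Indeed, the three numbered hypotheses (i)--(iii) of the corollary are word-for-word the three hypotheses of the theorem, read with ``finite'' in place of ``in $\mathcal{S}$'', and the claimed conclusion that $\Ext^{s}_{R}(N, H^{t}_{\fa}(X))$ is finite is exactly the corresponding specialization of the theorem's conclusion. So the entire content of the proof is the single substitution $\mathcal{S}=\mathcal{C}_{f.g}(R)$, provided we first know this substitution is legitimate.

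The one point that genuinely requires verification is therefore that $\mathcal{C}_{f.g}(R)$ really is a Serre subcategory of $\mathcal{C}(R)$, which is what licenses the application of Theorem \ref{2.3}. First I would check closure under submodules and quotients: since $R$ is Noetherian, every submodule of a finite module is again finite, and every quotient of a finite module is visibly finite. Then I would check closure under extensions, namely that for a short exact sequence $0\longrightarrow A\longrightarrow B\longrightarrow C\longrightarrow 0$ with $A$ and $C$ finite, the module $B$ is finite; this follows by lifting a finite generating set of $C$ to $B$ and adjoining the image of a finite generating set of $A$. Thus $\mathcal{C}_{f.g}(R)$ is closed under submodules, quotients, and extensions, hence is a Serre subcategory as required.

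With this in hand, the proof collapses to a single invocation of Theorem \ref{2.3} with $\mathcal{S}=\mathcal{C}_{f.g}(R)$, and all of the spectral-sequence bookkeeping is already absorbed into that theorem, so no further homological computation is needed. I expect no real obstacle in this argument; the only delicate dependence—and it is a mild one—is the use of the Noetherian hypothesis on $R$ to obtain closure under submodules, which is precisely the feature that makes the finite modules form a well-behaved class. In short, the hard work lives entirely in Theorem \ref{2.3}, and Corollary \ref{2.7} is a routine specialization of it.
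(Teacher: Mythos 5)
Your proposal is correct and is exactly the paper's proof: the paper also proves Corollary \ref{2.7} by the single application of Theorem \ref{2.3} with $\mathcal{S}=\mathcal{C}_{f.g}(R)$. Your additional verification that $\mathcal{C}_{f.g}(R)$ is a Serre subcategory (using the Noetherian hypothesis for closure under submodules) is a sound, if routine, supplement that the paper leaves implicit.
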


\begin{proof}
Apply Theorem \ref {2.3} for $\mathcal{S}= \mathcal{C}_{f.g}(R)$.
\end{proof}

%%% ----------------------------------------------------------------------

%%% ----------------------------------------------------------------------

%  {2.8}  ----------------------------------------------------------------------  {2.8}
Theorem \ref {2.1} in conjunction with Theorem \ref {2.3} arise the
following corollary.

\begin{cor} \label {2.8} Let $X$ be an $R$--module and $n, m$ be  non-negative
integers such that $n\leq m$. Assume also that $H^{i}_{\fa}(X)$ is
in $\mathcal{S}$ for all $i$, $i\neq n$ {\rm(}resp. $0\leq i\leq
n-1$ or $n+1\leq i\leq m${\rm)}. Then, for all $i$, $i\geq 0$
{\rm(}resp. $0\leq i\leq m-n${\rm)}, $\emph{\Ext}^{i}_{R}(N,
H^{n}_{\frak{a}}(X))$ is in $\mathcal{S}$ if and only if
$\emph{\Ext}^{i+ n}_{R}(N, X)$ is in $\mathcal{S}$.
\end{cor}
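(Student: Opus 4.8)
The plan is to reduce both implications of the biconditional to the two spectral-sequence theorems already in hand, Theorem \ref{2.1} and Theorem \ref{2.3}, via the elementary observation that membership of a local cohomology module in $\mathcal{S}$ propagates to all of its extension modules against the finite module $N$.

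First I would record the key reduction: if an $R$--module $M$ lies in $\mathcal{S}$, then $\Ext^{j}_{R}(N, M)$ lies in $\mathcal{S}$ for every $j \geq 0$. Since $N$ is finite and $R$ is Noetherian, $N$ admits a resolution $P_{\bullet}$ by finitely generated free modules, so $\Ext^{j}_{R}(N, M)$ is a subquotient of $\Hom_{R}(P_{j}, M)$, a finite direct sum of copies of $M$. As $\mathcal{S}$ is closed under finite direct sums (being closed under extensions) and under subquotients, the claim follows. Applying this with $M = H^{i}_{\fa}(X)$, the hypothesis that $H^{i}_{\fa}(X) \in \mathcal{S}$ for the relevant indices $i \neq n$ at once yields $\Ext^{j}_{R}(N, H^{i}_{\fa}(X)) \in \mathcal{S}$ for all $j$ and all such $i$.

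For the direction $\Ext^{i}_{R}(N, H^{n}_{\fa}(X)) \in \mathcal{S} \Rightarrow \Ext^{i+n}_{R}(N, X) \in \mathcal{S}$, I would apply Theorem \ref{2.1} with $t = i + n$. Its hypothesis requires $\Ext^{t-r}_{R}(N, H^{r}_{\fa}(X)) \in \mathcal{S}$ for $0 \leq r \leq i + n$; for $r \neq n$ this holds by the reduction above, and the single remaining term, at $r = n$, is exactly $\Ext^{i}_{R}(N, H^{n}_{\fa}(X))$, which is in $\mathcal{S}$ by assumption. Conversely, for $\Ext^{i+n}_{R}(N, X) \in \mathcal{S} \Rightarrow \Ext^{i}_{R}(N, H^{n}_{\fa}(X)) \in \mathcal{S}$, I would invoke Theorem \ref{2.3} with $s = i$ and $t = n$, so that $s + t = i + n$. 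Condition (i) is then the given hypothesis, while conditions (ii) and (iii) involve only $\Ext$ modules of $H^{j}_{\fa}(X)$ with $j < n$ and with $j > n$ respectively, all of which lie in $\mathcal{S}$ by the reduction.

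The only genuinely delicate point, and the sole place where the two versions of the statement differ, is the index bookkeeping in the parenthetical (bounded) case, where $H^{j}_{\fa}(X) \in \mathcal{S}$ is assumed only for $0 \leq j \leq m$ with $j \neq n$, and the conclusion is claimed only for $0 \leq i \leq m - n$. Here I would verify that every index occurring in the hypotheses of Theorems \ref{2.1} and \ref{2.3} stays inside $[0, m]$: the restriction $i \leq m - n$ forces $t = i + n \leq m$ in the forward application and $s + t = i + n \leq m$ in the backward one, so the indices $r$ (with $0 \leq r \leq i + n$) and $j$ (with $j < i + n$) never exceed $m$, keeping us within the range where the weaker hypothesis still supplies membership in $\mathcal{S}$. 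I expect this verification to be the main, if modest, obstacle; the two implications themselves are immediate once the propagation lemma and the correct specialization of $(s, t)$ are in place.
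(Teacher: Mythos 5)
Your proof is correct and is exactly the route the paper intends: the paper states Corollary \ref{2.8} as an immediate consequence of Theorems \ref{2.1} and \ref{2.3}, and your argument supplies precisely that derivation, including the standard propagation fact that $M\in\mathcal{S}$ forces $\Ext^{j}_{R}(N,M)\in\mathcal{S}$ for all $j$ (which the paper elsewhere invokes via \cite[Proposition 3.4]{HV}). Your index bookkeeping in the bounded case is also right, since $i\leq m-n$ keeps every cohomological index occurring in the hypotheses of both theorems within $[0,m]$.
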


%%% ----------------------------------------------------------------------

%%% ----------------------------------------------------------------------

%  {2.9}  ----------------------------------------------------------------------  {2.9}
In the course of the remaining parts of the paper by
$\cd_{\mathcal{S}}(\fa, X)$ ($\mathcal{S}$--cohomological dimension
of $X$ with respect to $\fa$) we mean the largest integer $i$ in which
$H^{i}_{\fa}(X)$ is not in $\mathcal{S}$ (see \cite [Definition 3.4]
{AT} or  \cite [Definition 3.5] {AM1}). Note that when $\mathcal{S}=
0$, then $\cd_{\mathcal{S}}(\fa, X)= \cd(\fa, X)$ as in \cite{Ha1}.

\begin{cor} \label {2.9} Let $X$ be an $R$--module and $n$ be a non-negative integer.
Then the following statements hold true.
               \begin{enumerate}
                   \item[\emph{(i)}]{If $\emph{\cd}_{\mathcal{S}}(\fa, X)= 0$,
                   then $\emph{\Ext}_{R}^{n}(N, \Gamma_{\fa}(X))$ is in $\mathcal{S}$ if and only if $\emph{\Ext}_{R}^{n}(N, X)$ is in $\mathcal{S}$.}
                   \item[\emph{(ii)}]{If $\emph{\cd}_{\mathcal{S}}(\fa, X)= 1$,
                   then $\emph{\Ext}_{R}^{n}(N, H^{1}_{\fa}(X))$ is in $\mathcal{S}$ if and only if $\emph{\Ext}_{R}^{n+ 1}(N, X/\Gamma_{\fa}(X))$ is in $\mathcal{S}$.}
                   \item[\emph{(iii)}]{If $\emph{\cd}_{\mathcal{S}}(\fa, X)= 2$,
                   then $\emph{\Ext}_{R}^{n}(N, H^{2}_{\fa}(X))$ is in $\mathcal{S}$ if and only if $\emph{\Ext}_{R}^{n +2}(N, D_{\fa}(X))$ is in $\mathcal{S}$.}
               \end{enumerate}
\end{cor}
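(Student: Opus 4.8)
The plan is to obtain all three equivalences from Corollary \ref{2.8}, each time applying it to a module whose local cohomology is concentrated in the single degree appearing in the corresponding statement. The whole point of the hypotheses $\cd_{\mathcal{S}}(\fa, X)= 0, 1, 2$ is to guarantee, after an appropriate reduction, that $H^{i}_{\fa}(-)$ lies in $\mathcal{S}$ for every degree $i$ except the prescribed one, so that the single-nonzero-degree version of Corollary \ref{2.8} becomes applicable.

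For part (i) no reduction is needed. Since $\cd_{\mathcal{S}}(\fa, X)= 0$ forces $H^{i}_{\fa}(X)$ to lie in $\mathcal{S}$ for every $i\geq 1$, and $H^{i}_{\fa}(X)= 0$ for $i< 0$, the hypothesis of Corollary \ref{2.8} with $n= 0$ is met. Applying it gives, for every $i\geq 0$, that $\Ext^{i}_{R}(N, \Gamma_{\fa}(X))$ is in $\mathcal{S}$ if and only if $\Ext^{i}_{R}(N, X)$ is in $\mathcal{S}$, which is exactly the asserted equivalence.

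For parts (ii) and (iii) I would first replace $X$ by a module with the same higher local cohomology but with the low degrees annihilated. For part (ii), set $\overline{X}= X/\Gamma_{\fa}(X)$; from the short exact sequence $0\to \Gamma_{\fa}(X)\to X\to \overline{X}\to 0$ and the vanishing $H^{i}_{\fa}(\Gamma_{\fa}(X))= 0$ for $i\geq 1$ (see \cite{BSh}), the long exact sequence of local cohomology yields $H^{0}_{\fa}(\overline{X})= 0$ and $H^{i}_{\fa}(\overline{X})\cong H^{i}_{\fa}(X)$ for all $i\geq 1$. Hence $\cd_{\mathcal{S}}(\fa, X)= 1$ gives $H^{i}_{\fa}(\overline{X})\in \mathcal{S}$ for every $i\neq 1$, and Corollary \ref{2.8} applied to $\overline{X}$ with $n= 1$, combined with $H^{1}_{\fa}(\overline{X})\cong H^{1}_{\fa}(X)$, delivers the equivalence. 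For part (iii) I would use the ideal transform $D_{\fa}(X)$: the exact sequence $0\to \Gamma_{\fa}(X)\to X\to D_{\fa}(X)\to H^{1}_{\fa}(X)\to 0$ together with $H^{i}_{\fa}(D_{\fa}(X))= 0$ for $i= 0, 1$ and $H^{i}_{\fa}(D_{\fa}(X))\cong H^{i}_{\fa}(X)$ for $i\geq 2$ (see \cite{BSh}) reduce $\cd_{\mathcal{S}}(\fa, X)= 2$ to the statement that $H^{i}_{\fa}(D_{\fa}(X))\in \mathcal{S}$ for every $i\neq 2$; applying Corollary \ref{2.8} to $D_{\fa}(X)$ with $n= 2$, and using $H^{2}_{\fa}(D_{\fa}(X))\cong H^{2}_{\fa}(X)$, finishes the argument.

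The inputs are the standard local-cohomology isomorphisms recorded above, so I expect no genuine obstacle. The only point requiring care is the bookkeeping: one must check that passing to $\overline{X}$ or to $D_{\fa}(X)$ preserves exactly the degree in which local cohomology is permitted to lie outside $\mathcal{S}$, while simultaneously killing all the lower degrees. This is precisely what converts the hypothesis on $\cd_{\mathcal{S}}(\fa, X)$ into the single-index hypothesis demanded by Corollary \ref{2.8}.
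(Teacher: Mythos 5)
Your proposal is correct and follows essentially the same route as the paper: both arguments apply Corollary \ref{2.8} to $X$ itself in case (i), to $X/\Gamma_{\fa}(X)$ in case (ii), and to the ideal transform $D_{\fa}(X)$ in case (iii), using the standard facts that these modules have the same local cohomology as $X$ in positive (resp.\ $\geq 2$) degrees and vanishing local cohomology below (the paper cites \cite[Corollary 2.2.8]{BSh} for the $D_{\fa}(X)$ case). Your write-up merely makes explicit the bookkeeping that the paper leaves as ``by assumption.''
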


\begin{proof}(i) This is clear from Corollary \ref {2.8}.

(ii) For all $i\neq 1$, $H^{i}_{\fa}(X/\Gamma_{\fa}(X))$ is in
$\mathcal{S}$ by assumption. Now, the assertion follows from
Corollary \ref {2.8}.

(iii) By \cite [Corollary 2.2.8]{BSh}, $H^{i}_{\fa}(D_{\fa}(X))$ is
in $\mathcal{S}$ for all $i\neq 2$. Again, use Corollary \ref {2.8}.
\end{proof}

%%% ----------------------------------------------------------------------

%%% ----------------------------------------------------------------------

%%% ----------------------------------------------------------------------
%%% ----------------------------------------------------------------------
%%% ----------------------------------------------------------------------

\section{Special Serre subcategories}

%%% ----------------------------------------------------------------------
%%% ----------------------------------------------------------------------
%%% ----------------------------------------------------------------------
In this section, we study the extension functors of local cohomology
modules in some special Serre subcategories of the category of
$R$--modules. We begin with a definition.

%%% ----------------------------------------------------------------------

%%% ----------------------------------------------------------------------

%  {3.1}  ----------------------------------------------------------------------   {3.1}
\begin{defn}\label{3.1}  {\rm(}see \cite [Definition 2.1]{AM1}{\rm)}
Let $\mathcal{M}$ be a Serre subcategory of the category of
$R$--modules. We say that $\mathcal{M}$ is a {\it Melkersson
subcategory with respect to the ideal $\fa$} if for any
$\fa$--torsion $R$--module $X$, $0:_{X}\fa$ is in $\mathcal{M}$
implies that $X$ is in $\mathcal{M}$. $\mathcal{M}$ is called {\it
Melkersson subcategory} when it is a Melkersson subcategory with
respect to all ideals of $R$.
\end{defn}

In honor of Melkersson who proved this property for Artinian
category (see \cite [Theorem 7.1.2]{BSh}) and Artinian
$\fa$--cofinite category (see \cite [Proposition 4.1]{Mel}), we
named the above subcategory as Melkersson subcategory. To see some
examples of Melkersson subcategories, we refer the reader to \cite
[Examples 2.4 and 2.5]{AM1}.\\

The next two propositions show that how properties of Melkersson
subcategories behave similarly at the initial points of $\Ext$ and
local cohomology modules. These propositions give new proofs for
\cite [Theorems 2.9 and 2.13]{AM1} based on Theorems \ref{2.1} and
\ref {2.3}.

%%% ----------------------------------------------------------------------

%%% ----------------------------------------------------------------------

%  {3.2}  ----------------------------------------------------------------------  {3.2}
\begin{prop} \label {3.2} {\rm(}see  \cite [Theorem 2.13]{AM1}{\rm)}
Let $X$ be an $R$--module, $\mathcal{M}$ be a Melkersson subcategory
with respect to the ideal $\fa$, and $n$ be a non-negative integer
such that $\emph{\Ext}^{j-i}_{R}(R/\fa, H^{i}_{\fa}(X))$ is in
$\mathcal{M}$ for all $i, j$ with $0\leq i\leq n-1$ and $j= n, n+1.$
Then the following statements are equivalent.
      \begin{itemize}
           \item[(i)]{$\emph{\Ext}^{n}_{R}(R/\fa, X)$ is in $\mathcal{M}$.}
           \item[(ii)]{$H^{n}_{\fa}(X)$ is in $\mathcal{M}$.}
      \end{itemize}
\end{prop}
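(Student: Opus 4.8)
The plan is to prove the equivalence by showing each implication separately, using Theorems \ref{2.1} and \ref{2.3} specialized to $s=0$ (so that the relevant $\Ext$ module is $\Hom_R(R/\fa, H^n_\fa(X))$) together with the defining property of a Melkersson subcategory. The key observation is that $H^n_\fa(X)$ is $\fa$--torsion, so by Definition \ref{3.1} it lies in $\mathcal{M}$ precisely when $0:_{H^n_\fa(X)}\fa$ does; and since $N=R/\fa$ is annihilated by $\fa$, one has a canonical identification $\Hom_R(R/\fa, H^n_\fa(X)) \cong 0:_{H^n_\fa(X)}\fa$. Thus the membership of $H^n_\fa(X)$ in $\mathcal{M}$ is equivalent to the membership of $\Hom_R(R/\fa, H^n_\fa(X))$ in $\mathcal{M}$, and this is the lever that converts statements about local cohomology into statements about $\Ext$.

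First I would establish the direction (i)$\Rightarrow$(ii). Assuming $\Ext^n_R(R/\fa, X)$ is in $\mathcal{M}$, I want to apply Theorem \ref{2.3} (or equivalently Corollary \ref{2.4}) with $s=0$, $t=n$, and $\mathcal{S}=\mathcal{M}$. The hypotheses of that corollary require $\Ext^n_R(R/\fa, X)\in\mathcal{M}$, which is (i), and $\Ext^{n+1-i}_R(R/\fa, H^i_\fa(X))\in\mathcal{M}$ for all $0\le i<n$, which is exactly the $j=n+1$ part of the blanket hypothesis. The conclusion gives $\Hom_R(R/\fa, H^n_\fa(X))\in\mathcal{M}$, and then the Melkersson property promotes this to $H^n_\fa(X)\in\mathcal{M}$ via the identification above.

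For the reverse direction (ii)$\Rightarrow$(i), I would invoke Theorem \ref{2.1} with $t=n$ and $\mathcal{S}=\mathcal{M}$, which asserts that $\Ext^n_R(R/\fa, X)\in\mathcal{M}$ provided $\Ext^{n-r}_R(R/\fa, H^r_\fa(X))\in\mathcal{M}$ for all $0\le r\le n$. For $0\le r\le n-1$ this is the $j=n$ portion of the standing hypothesis; for $r=n$ the term is $\Hom_R(R/\fa, H^n_\fa(X))$, which lies in $\mathcal{M}$ because $H^n_\fa(X)\in\mathcal{M}$ by assumption (ii) and $\mathcal{M}$ is closed under submodules, $\Hom_R(R/\fa, H^n_\fa(X))$ being a submodule of $H^n_\fa(X)$. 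Hence all the hypotheses of Theorem \ref{2.1} hold and we conclude (i).

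The main subtlety to watch is bookkeeping on the index ranges: I must check that the two values $j=n$ and $j=n+1$ in the blanket hypothesis line up exactly with the shifted indices demanded by Theorems \ref{2.1} and \ref{2.3} (namely $\Ext^{n-i}$ and $\Ext^{n+1-i}$ respectively over $0\le i\le n-1$), and that the boundary case $r=n$ in the backward direction is handled by the Melkersson property rather than by the hypothesis. No genuinely hard estimate arises; the only real content beyond the two cited theorems is the translation between $H^n_\fa(X)\in\mathcal{M}$ and $\Hom_R(R/\fa,H^n_\fa(X))\in\mathcal{M}$, which is immediate from $\fa$--torsionness and the definition of a Melkersson subcategory.
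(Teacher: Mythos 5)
Your proof is correct and follows essentially the same route as the paper: Theorem \ref{2.3} with $s=0$, $t=n$ (equivalently Corollary \ref{2.4}) plus the Melkersson property for (i)$\Rightarrow$(ii), and Theorem \ref{2.1} with $t=n$ for (ii)$\Rightarrow$(i), using closure under submodules to handle the $r=n$ term. The only difference is that you spell out the index bookkeeping and the identification $\Hom_R(R/\fa, H^n_\fa(X))\cong 0:_{H^n_\fa(X)}\fa$, which the paper leaves implicit.
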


\begin{proof}(i) $\Rightarrow$ (ii). Apply Theorem \ref {2.3} with
$s= 0$ and $t= n$. It shows that $\Hom_{R}(R/\fa, H^{n}_{\fa}(X))$
is in $\mathcal{M}$. Thus $H^{n}_{\fa}(X)$ is in $\mathcal{M}$.

(ii) $\Rightarrow$ (i). Apply Theorem \ref {2.1} with $t= n.$
\end{proof}

%%% ----------------------------------------------------------------------

%%% ----------------------------------------------------------------------

%  {3.3}  ----------------------------------------------------------------------  {3.3}
\begin{prop} \label {3.3} {\rm(}see \cite [Theorem 2.9]{AM1}{\rm)}
Let $X$ be an $R$--module, $\mathcal{M}$ be a Melkersson subcategory
with respect to the ideal $\fa$, and $n$ be a non-negative integer.
Then the following statements are equivalent.
                         \begin{itemize}
                              \item[(i)]{$H^{i}_{\fa}(X)$ is in $\mathcal{M}$ for all $i$, $0\leq i\leq n$.}
                              \item[(ii)]{$\emph{\Ext}^{i}_{R}(R/\fa, X)$ is in $\mathcal{M}$ for all $i$, $0\leq i\leq n$.}
                         \end{itemize}
\end{prop}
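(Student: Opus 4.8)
The statement is an equivalence between two families of membership conditions indexed by $0 \leq i \leq n$, so the natural strategy is induction on $n$, using the two preceding results (Theorems \ref{2.1} and \ref{2.3}, or their specializations Proposition \ref{3.2}) as the engine at each inductive step. The base case $n = 0$ is clean: $\Gamma_{\fa}(X) = H^{0}_{\fa}(X)$ is already $\fa$--torsion, and $\Hom_{R}(R/\fa, X) = \Hom_{R}(R/\fa, \Gamma_{\fa}(X)) = 0 :_{\Gamma_{\fa}(X)} \fa$, so the Melkersson property of $\mathcal{M}$ directly converts the membership of $0 :_{\Gamma_{\fa}(X)} \fa$ into membership of $\Gamma_{\fa}(X)$ and vice versa.

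Let me lay out the inductive step. Assume the equivalence holds for $n-1$, and suppose (i) holds for all $i$, $0 \leq i \leq n$. By the inductive hypothesis, (ii) holds for $0 \leq i \leq n-1$, so it remains to show $\Ext^{n}_{R}(R/\fa, X)$ is in $\mathcal{M}$. Here I would apply Theorem \ref{2.1} with $t = n$: the hypotheses require $\Ext^{n-r}_{R}(R/\fa, H^{r}_{\fa}(X))$ to be in $\mathcal{M}$ for all $0 \leq r \leq n$. For $0 \leq r \leq n-1$, the module $H^{r}_{\fa}(X)$ is in $\mathcal{M}$ by assumption (i), and since $\mathcal{M}$ is a Serre subcategory, any $\Ext$ from a finite module into it stays in $\mathcal{M}$ (one resolves $R/\fa$ by finite frees and uses closure under kernels, cokernels, and finite direct sums). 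For $r = n$ the term is $\Hom_{R}(R/\fa, H^{n}_{\fa}(X)) = 0 :_{H^{n}_{\fa}(X)} \fa$, which lies in $\mathcal{M}$ because $H^{n}_{\fa}(X)$ does by (i). Hence Theorem \ref{2.1} yields $\Ext^{n}_{R}(R/\fa, X) \in \mathcal{M}$, completing (i)$\Rightarrow$(ii). For the reverse direction I would use Proposition \ref{3.2}: assuming (ii) for $0 \leq i \leq n$, the inductive hypothesis gives (i) for $0 \leq i \leq n-1$, which supplies exactly the hypothesis of Proposition \ref{3.2} (that $\Ext^{j-i}_{R}(R/\fa, H^{i}_{\fa}(X)) \in \mathcal{M}$ for $0 \leq i \leq n-1$, $j = n, n+1$, again via the Serre-closure observation above); then the equivalence in Proposition \ref{3.2} converts $\Ext^{n}_{R}(R/\fa, X) \in \mathcal{M}$ into $H^{n}_{\fa}(X) \in \mathcal{M}$.

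The main obstacle, and the point to verify carefully, is the bookkeeping that the hypothesis of Proposition \ref{3.2} is genuinely furnished by the inductive hypothesis. That hypothesis involves $\Ext$ at two cohomological degrees $j = n$ and $j = n+1$ of local cohomology modules $H^{i}_{\fa}(X)$ with $i \leq n-1$; what makes these available is precisely that each such $H^{i}_{\fa}(X)$ is itself in $\mathcal{M}$ (from the lower-$n$ case of (i)), combined with the standard fact that $\Ext^{k}_{R}(R/\fa, M) \in \mathcal{M}$ whenever $M \in \mathcal{M}$. I would state this closure fact once as a small observation at the start of the proof so that both halves of the induction can invoke it cleanly, rather than re-deriving it inside each application of Theorems \ref{2.1} and \ref{2.3}.
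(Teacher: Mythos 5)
Your proposal is correct and takes essentially the same route as the paper: the forward direction is Theorem \ref{2.1} applied with $N=R/\fa$ (using that $\Ext^{k}_{R}(R/\fa, M)\in\mathcal{M}$ whenever $M\in\mathcal{M}$, since it is a subquotient of a finite direct sum of copies of $M$), and the reverse direction is induction on $n$ with base case $\Hom_{R}(R/\fa, X)\cong \Hom_{R}(R/\fa, \Gamma_{\fa}(X))$ plus the Melkersson property, and inductive step via Proposition \ref{3.2}. The only cosmetic differences are that the paper proves (i)$\Rightarrow$(ii) directly for each $t\leq n$ rather than inside the induction, and leaves the Serre-closure fact implicit where you state it explicitly.
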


\begin{proof}
(i) $\Rightarrow$ (ii). Let $0\leq t\leq n.$ Since $H^{r}_{\fa}(X)$
is in $\mathcal{M}$ for all  $r$, $0\leq r\leq t,$
$\Ext^{t-r}_{R}(R/\fa, H^{r}_{\fa}(X))$ is in $\mathcal{M}$ for all
$r$, $0\leq r\leq t.$ Hence $\Ext^{t}_{R}(R/\fa, X)$ is in
$\mathcal{M}$ by Theorem \ref {2.1}.

(ii) $\Rightarrow$ (i). We prove by using induction on $n$. Let $n=
0$ and consider the isomorphism $\Hom_{R}(R/\fa, X)\cong
\Hom_{R}(R/\fa, \Gamma_{\fa}(X)).$ Since $\Hom_{R}(R/\fa, X)$ is in
$\mathcal{M}$, $\Hom_{R}(R/\fa, \Gamma_{\fa}(X))$ is in
$\mathcal{M}$. Thus $\Gamma_{\fa}(X)$ is in $\mathcal{M}.$

Now, suppose that $n> 0$ and that $n-1$ is settled. Since
$\Ext^{i}_{R}(R/\fa, X)$ is in $\mathcal{M}$ for all $i$, $0\leq
i\leq n-1$, $H^{i}_{\fa}(X)$ is in $\mathcal{M}$ for all $i$, $0
\leq i\leq n-1$ by the induction hypothesis. Now, by the above
proposition, $H^{n}_{\fa}(X)$ is in $\mathcal{M}$.
\end{proof}

%%% ----------------------------------------------------------------------

%%% ----------------------------------------------------------------------

%   {3.4}   ----------------------------------------------------------------------  {3.4}
In the next proposition, we study the membership of the local
cohomology modules of an $R$--module $X$ with respect to different
ideals in Melkersson subcategories which, among other things, shows
that $\cd_{\mathcal{M}}(\fb, X)\leq \cd_{\mathcal{M}}(\fa, X)+
\ara(\fb/\fa)$ where $\mathcal{M}$ is a Melkersson subcategory of
$\mathcal{C}(R)$ and $\fb$ is an ideal of $R$ contains $\fa$.

\begin{prop} \label {3.4} Let $X$ be an $R$--module and $\fb$ be an ideal of $R$
such that $\fa\subseteq \fb$. Assume also that $\mathcal{M}$ is a
Melkersson subcategory of $\mathcal{C}(R)$ and $n$ is a non-negative
integer such that $H^{i}_{\fa}(X)$ is in $\mathcal{M}$ for all $i$,
$0\leq i\leq n$ \emph{(}resp.  $i\geq n$\emph{)}. Then
$H^{i}_{\fb}(X)$ is in $\mathcal{M}$ for all $i$, $0\leq i\leq n$
\emph{(}resp. $i\geq n+\emph\ara(\fb/\fa)$\emph{)}.
\end{prop}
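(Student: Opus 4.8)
The plan is to reduce the whole statement to the case in which $\fb$ is obtained from $\fa$ by adjoining a single element, and then to iterate, reading off the shift $\ara(\fb/\fa)$ from the number of elements needed in the upper--range case and observing that no shift occurs in the lower--range case. First I would record the basic one--element exact sequence. Fix $y \in R$ and an ideal $\fc$, and put $\fb' = \fc + (y)$. Since $\Gamma_{\fb'} = \Gamma_{(y)}\circ \Gamma_{\fc}$ and $\Gamma_{\fc}$ sends injective modules to injective (hence $\Gamma_{(y)}$--acyclic) modules, the Grothendieck spectral sequence of the composite functor is $E_2^{p,q} = H^{p}_{(y)}(H^{q}_{\fc}(X)) \Rightarrow H^{p+q}_{\fb'}(X)$. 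As $H^{p}_{(y)} = 0$ for $p \geq 2$, only the columns $p = 0, 1$ survive and the spectral sequence collapses to the short exact sequences
$$0 \longrightarrow H^{1}_{(y)}(H^{i-1}_{\fc}(X)) \longrightarrow H^{i}_{\fb'}(X) \longrightarrow \Gamma_{(y)}(H^{i}_{\fc}(X)) \longrightarrow 0 \qquad (\ast)$$
for every $i \geq 0$ (with the convention $H^{-1}_{\fc}(X) = 0$).

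Second, I would isolate the preservation lemma that makes $(\ast)$ usable: if $M$ is in $\mathcal{M}$, then $\Gamma_{(y)}(M)$ and $H^{1}_{(y)}(M)$ are in $\mathcal{M}$. The first is a submodule of $M$, so this is immediate. For the second, note that since $R/(y)$ is finite it has a resolution by finite free modules, whence each $\Ext^{i}_{R}(R/(y), M)$ is a subquotient of a finite direct sum of copies of $M$ and therefore lies in $\mathcal{M}$; because $\mathcal{M}$ is Melkersson with respect to $(y)$, Proposition~\ref{3.3} then gives $H^{i}_{(y)}(M)$ in $\mathcal{M}$ for all $i$, in particular for $i = 1$.

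Third comes the single--element step, obtained by feeding the preservation lemma into the two outer terms of $(\ast)$ and using that $\mathcal{M}$ is closed under extensions. This yields two variants: (a) if $H^{j}_{\fc}(X) \in \mathcal{M}$ for $0 \leq j \leq n$, then $H^{i}_{\fb'}(X) \in \mathcal{M}$ for $0 \leq i \leq n$ with \emph{no} shift, since for $i \leq n$ both $i$ and $i-1$ lie in the admissible range and the case $i = 0$ is covered by $H^{-1}_{\fc}(X) = 0$; and (b) if $H^{j}_{\fc}(X) \in \mathcal{M}$ for $j \geq m$, then $H^{i}_{\fb'}(X) \in \mathcal{M}$ for $i \geq m+1$, a shift by one forced by the requirement that both $i$ and $i-1$ be at least $m$. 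I would then iterate. For the lower range, write $\fb = \fa + (b_{1}, \ldots, b_{t})$ with $b_{k} \in \fb$, set $\fc_{k} = \fa + (b_{1}, \ldots, b_{k})$, and apply variant (a) to each extension $\fc_{k} = \fc_{k-1} + (b_{k})$; since nothing shifts, after $t$ steps $H^{i}_{\fb}(X) \in \mathcal{M}$ for $0 \leq i \leq n$. For the upper range, choose $y_{1}, \ldots, y_{r} \in \fb$ with $r = \ara(\fb/\fa)$ and $\sqrt{\fa + (y_{1}, \ldots, y_{r})} = \sqrt{\fb}$, set $\fc_{k} = \fa + (y_{1}, \ldots, y_{k})$, and apply variant (b) $r$ times; each raises the threshold by one, giving $H^{i}_{\fc_{r}}(X) \in \mathcal{M}$ for $i \geq n + r$, and $\sqrt{\fc_{r}} = \sqrt{\fb}$ forces $H^{i}_{\fc_{r}} = H^{i}_{\fb}$, which is exactly the claimed bound $i \geq n + \ara(\fb/\fa)$.

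The hard part will be the preservation lemma of the second step, that is, the passage of membership in $\mathcal{M}$ through the functor $H^{1}_{(y)}$. This is precisely where the Melkersson hypothesis is indispensable: a general Serre subcategory is not closed under localization, and $H^{1}_{(y)}(M)$ is manufactured from the localization $M_{y}$, so one cannot argue directly but must route the argument through $\Ext^{i}_{R}(R/(y), M)$ and Proposition~\ref{3.3}. Once this step is secured, the spectral sequence $(\ast)$ and the bookkeeping with $\ara(\fb/\fa)$ are routine.
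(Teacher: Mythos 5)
Your proposal is correct; the underlying engine is the same as the paper's, but the decomposition is genuinely different. The paper runs the composite-functor spectral sequence once, with the full auxiliary ideal: choosing $x_{1},\dots,x_{r}$ with $r=\ara(\fb/\fa)$ and replacing $\fb$ by $\fa+\fc$ where $\fc=(x_{1},\dots,x_{r})$, it considers $E_{2}^{p,q}=H^{p}_{\fc}(H^{q}_{\fa}(X))\Rightarrow H^{p+q}_{\fb}(X)$ and concludes through the finite filtration of the abutment. There the shift by $r$ in the upper range comes from the vanishing $H^{p}_{\fc}=0$ for $p>r$, which kills the $E_{2}$-terms involving $H^{i}_{\fa}(X)$ with $i<n$ (a point the paper leaves implicit in the phrase ``by assumption and Proposition \ref{3.3}''), while the remaining terms lie in $\mathcal{M}$ by exactly the preservation mechanism you isolate. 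You instead adjoin one element at a time, so that each spectral sequence degenerates to the short exact sequence $(\ast)$, and the shift accumulates as $r$ single-element shifts: you trade the paper's filtration bookkeeping for an induction, and trade $H^{p}_{\fc}=0$ for $p>r$ for the iterated use of $H^{p}_{(y)}=0$ for $p\geq 2$. What your route buys: each step handles only an extension of two modules, the origin of the bound $\ara(\fb/\fa)$ (and its absence in the lower range) is completely transparent, and in the lower range you can take actual generators of $\fb$, so no radical argument is needed there. What the paper's route buys: a single uniform computation covering both ranges at once, with no induction. The key lemma is identical in both arguments: for $M\in\mathcal{M}$ and a finite module $T$, each $\Ext^{j}_{R}(T,M)$ is a subquotient of a finite direct sum of copies of $M$, and Proposition \ref{3.3} then converts this into membership of the local cohomology of $M$ in $\mathcal{M}$; you have correctly identified this, and the indispensability of the Melkersson hypothesis in it, as the crux.
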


\begin{proof} Let $r= \ara(\fb/\fa)$. There exist $x_{1},...,x_{r}\in R$
such that $\sqrt{\fb}= \sqrt{\fa + (x_{1},...,x_{r})}$. We can, and
do, assume that $\fb= \fa+ \fc$ where $\fc= (x_{1},...,x_{r})$. By
\cite[Theorem 11.38]{Rot}, there is a Grothendieck spectral sequence
$$E^{p, q}_{2}:= H^{p}_{\fc}(H^{q}_{\fa}(X))
_{\stackrel{\Longrightarrow}{p}} H^{p +q}_{\fb}(X).$$ Assume that
$t$ is a non-negative integer such that $0\leq t\leq n$ (resp.
$t\geq n+ r$). For all $i$, $0\leq i\leq t$, $E_{\infty}^{t- i, i}=
E_{t+2}^{t-i, i}$ since $E_{j}^{t- i- j, i+ j- 1}= 0= E_{j}^{t- i+
j, i- j+ 1}$ for all $j\geq t+2$. Therefore $E_{\infty}^{t- i, i}$
is in $\mathcal{M}$ from the fact that $E_{t+ 2}^{t- i, i}$ is a
subquotient of $E_{2}^{t- i, i}= H^{t- i}_{\fc}(H^{i}_{\fa}(X))$
which belongs to $\mathcal{M}$ by assumption and Proposition \ref
{3.3}. There exists a finite filtration
$$0= \phi^{t+1}H^{t}\subseteq \phi^{t}H^{t}\subseteq \cdots\subseteq \phi^{1}H^{t}\subseteq \phi^{0}H^{t}= H^{t}_{\fb}(X)$$
such that $E_{\infty}^{t- i, i}= \phi^{t- i}H^{t}/\phi^{t- i+
1}H^{t}$ for all $i$, $0\leq i\leq t.$ Now the exact sequences
$$0\lo \phi^{t- i+ 1}H^{t}\lo \phi^{t- i}H^{t}\lo E_{\infty}^{t- i, i}\lo 0,$$
for all $i$, $0\leq i\leq t$, show that $H^{t}_{\fb}(X)$ is in
$\mathcal{M}$.
\end{proof}

%%% ----------------------------------------------------------------------

%%% ----------------------------------------------------------------------

%   {3.5}   ----------------------------------------------------------------------  {3.5}
Let $\fa$ be an ideal of $R$, $N$ a finite $\fa$--torsion module and
$s, t$ non-negative integers. In the following theorem, we find some
sufficient conditions for validity of the isomorphism
$\Ext^{s+t}_{R}(N, X)\cong \Ext^{s}_{R}(N, H^{t}_{\fa}(X))$ which
concerns to the case $\mathcal{S}= 0$.

\begin{thm} \label {3.5} Let $X$ be an $R$--module and $s, t$ be
non-negative integers such that
               \begin{itemize}
                   \item[(i)]{$\emph{\Ext}^{s+ t- i}_{R}(N, H^{i}_{\fa}(X))= 0$ for all $i$, $0\leq i< t$ or $t< i\leq s+ t$,}
                   \item[(ii)]{$\emph{\Ext}^{s+ t+ 1- i}_{R}(N, H^{i}_{\fa}(X))= 0$ for all $i$, $0\leq i< t,$ and}
                   \item[(iii)]{$\emph{\Ext}^{s+ t- 1- i}_{R}(N, H^{i}_{\fa}(X))= 0$ for all $i$, $t+ 1\leq i< s+ t.$}
               \end{itemize}
Then we have $\emph{\Ext}^{s}_{R}(N, H^{t}_{\fa}(X))\cong
\emph{\Ext}^{s+t}_{R}(N, X)$.
\end{thm}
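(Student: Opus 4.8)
The plan is to extract the isomorphism directly from the Grothendieck spectral sequence
$$E^{p, q}_{2}:= \Ext^{p}_{R}(N, H^{q}_{\fa}(X)) \Longrightarrow \Ext^{p+q}_{R}(N, X),$$
exactly as in the proofs of Theorems \ref{2.1} and \ref{2.3}, but now using the vanishing hypotheses to collapse the relevant portion of the sequence instead of merely keeping it inside a Serre subcategory. The target isomorphism $\Ext^{s}_{R}(N, H^{t}_{\fa}(X)) \cong \Ext^{s+t}_{R}(N, X)$ should arise by showing, first, that $E_{2}^{s, t}$ already equals $E_{\infty}^{s, t}$ (so the $(s,t)$ entry survives unchanged to the limit), and second, that $E_{\infty}^{s, t}$ is in fact the entire abutment $\Ext^{s+t}_{R}(N, X)$ because every other graded piece of the filtration on the abutment vanishes.

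First I would establish that $E_{2}^{s, t} = E_{\infty}^{s, t}$. Since the differentials into and out of the $(s,t)$ spot at page $r \geq 2$ land in $E_{r}^{s+r, t+1-r}$ and come from $E_{r}^{s-r, t+r-1}$, and each such page-$r$ entry is a subquotient of the corresponding page-$2$ entry, it suffices to see that $E_{2}^{s+r, t+1-r} = 0$ and $E_{2}^{s-r, t+r-1} = 0$ for all $r \geq 2$. The outgoing targets $E_{2}^{s+r, t+1-r} = \Ext^{s+r}_{R}(N, H^{t+1-r}_{\fa}(X))$ are controlled by hypothesis (ii): writing $i = t+1-r$, so that $0 \leq i < t$ and the superscript on $\Ext$ is $s+r = s+t+1-i$, these are precisely the modules assumed to vanish in (ii). The incoming sources $E_{2}^{s-r, t+r-1} = \Ext^{s-r}_{R}(N, H^{t+r-1}_{\fa}(X))$ are handled by hypothesis (iii): writing $i = t+r-1$, so $t+1 \leq i$ and the superscript is $s-r = s+t-1-i$, these fall under (iii) for $i < s+t$, while for $i \geq s+t$ the superscript $s-r$ is negative and the $\Ext$ vanishes trivially. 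Hence all relevant differentials are zero and $E_{2}^{s, t}$ persists to $E_{\infty}^{s, t}$.

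Next I would exploit the finite filtration on the abutment, exactly as displayed in the proof of Theorem \ref{2.3}:
$$0 = \phi^{s+t+1}H^{s+t} \subseteq \phi^{s+t}H^{s+t} \subseteq \cdots \subseteq \phi^{0}H^{s+t} = \Ext^{s+t}_{R}(N, X),$$
with $E_{\infty}^{s+t-j, j} = \phi^{s+t-j}H^{s+t}/\phi^{s+t-j+1}H^{s+t}$. The $(s,t)$ piece corresponds to $j = t$, giving $E_{\infty}^{s, t} = \phi^{s}H^{s+t}/\phi^{s+1}H^{s+t}$. To upgrade this subquotient into the full module, I would show every other graded piece $E_{\infty}^{s+t-j, j}$ with $j \neq t$ vanishes. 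Each such piece is a subquotient of $E_{2}^{s+t-j, j} = \Ext^{s+t-j}_{R}(N, H^{j}_{\fa}(X))$, and hypothesis (i) — rewriting with $i = j$ so the superscript is $s+t-j = s+t-i$, for $0 \leq i < t$ or $t < i \leq s+t$ — is exactly the statement that all of these vanish. Consequently the filtration collapses so that $\phi^{s+1}H^{s+t} = 0$ and $\phi^{s}H^{s+t} = \Ext^{s+t}_{R}(N, X)$, yielding $E_{\infty}^{s, t} = \Ext^{s+t}_{R}(N, X)$ and hence $\Ext^{s}_{R}(N, H^{t}_{\fa}(X)) = E_{2}^{s, t} = E_{\infty}^{s, t} \cong \Ext^{s+t}_{R}(N, X)$.

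The main obstacle is purely bookkeeping: verifying that the three index ranges in (i), (ii), (iii) line up precisely with the three families of entries I need to kill — the abutment's off-diagonal graded pieces, the outgoing differential targets, and the incoming differential sources — and confirming at the boundary cases (for instance $i \geq s+t$, or $j$ equal to $0$ or $s+t$) that a negative $\Ext$ index or an out-of-range homological degree forces vanishing automatically. I expect no deep difficulty beyond this careful reindexing, since all the spectral-sequence machinery is already in place from the earlier proofs.
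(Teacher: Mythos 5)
Your proposal is correct and follows essentially the same route as the paper's own proof: the same Grothendieck spectral sequence, degeneration at the $(s,t)$ position via hypotheses (ii) and (iii) (since the targets $E_{r}^{s+r,\,t+1-r}$ and sources $E_{r}^{s-r,\,t+r-1}$ of the differentials are subquotients of vanishing $E_{2}$--terms), and collapse of the filtration on the abutment via hypothesis (i). The only cosmetic difference is that the paper packages the degeneration through the $B_{r}^{s,t}$ and $Z_{r}^{s,t}$ exact sequences recycled from Theorem \ref{2.3}, whereas you argue directly that all relevant differentials vanish --- the same argument in substance, and your explicit index bookkeeping (including the boundary cases where a negative index forces vanishing) is exactly what the paper leaves implicit.
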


\begin{proof} Consider the Grothendieck spectral sequence
$$E^{p, q}_{2}:= \Ext^{p}_{R}(N, H^{q}_{\frak{a}}(X))
_{\stackrel{\Longrightarrow}{p}} \Ext^{p +q}_{R}(N, X)$$ and, for
all $r\geq 2$, the exact sequences
$$0\rightarrow B_{r}^{s,t}\rightarrow Z_{r}^{s, t}\rightarrow
E_{r+1}^{s, t}\rightarrow 0 \textmd{ \ and \ } 0\rightarrow
Z_{r}^{s, t}\rightarrow E_{r}^{s, t}\rightarrow E_{r}^{s, t}/Z_{r}^{s,
t}\rightarrow 0$$
as we used in Theorem \ref{2.3}. Since $E_{2}^{s+r, t+1-r}= 0= E_{2}^{s-r,
t+r-1}$, $E_{r}^{s+r,
t+1-r}= 0= E_{r}^{s-r, t+r-1}$. Therefore $E_{r}^{s, t}/Z_{r}^{s, t}= 0= B_{r}^{s, t}$
which shows that $E^{s, t}_{r}= E^{s, t}_{r+ 1}$. Hence we have
$$\Ext^{s}_{R}(N, H^{t}_{\fa}(X))= E^{s, t}_{2}= E^{s, t}_{3}= \cdots= E^{s, t}_{s+t+1}= E^{s, t}_{s+t+2}= E^{s,t}_{\infty}.$$
There is a finite filtration
$$0= \phi^{s+t+1}H^{s+t}\subseteq \phi^{s+t}H^{s+t}\subseteq \cdots\subseteq \phi^{1}H^{s+t}\subseteq \phi^{0}H^{s+t}= \Ext^{s+t}_{R}(N, X)$$
such that $E^{s+t-j, j}_{\infty}=
\phi^{s+t-j}H^{s+t}/\phi^{s+t-j+1}H^{s+t}$ for all $j$, $0\leq j\leq
s+t.$ Note that for each $j$, $0\leq j\leq t- 1$ or $t+1\leq j\leq
s+ t$, by assumption (i), we have $E^{s+t-j, j}_{\infty}= 0$.
Therefore we get
$$0= \phi^{s+t+1}H^{s+t}= \phi^{s+t}H^{s+t}= \cdots= \phi^{s+2}H^{s+t}= \phi^{s+1}H^{s+t}$$
and
$$\phi^{s}H^{s+t}= \phi^{s-1}H^{s+t}= \cdots= \phi^{1}H^{s+t}= \phi^{0}H^{s+t}= \Ext^{s+t}_{R}(N, X).$$
Thus $\Ext^{s}_{R}(N, H^{t}_{\fa}(X))= E^{s, t}_{\infty}=
\phi^{s}H^{s+t}/\phi^{s+1}H^{s+t}= \Ext^{s+t}_{R}(N, X)$ as desired.
\end{proof}

The following corollaries are immediate applications of the above
theorem which give us some useful isomorphisms and equalities about
the extension functors and Bass numbers of local cohomology modules,
respectively.
%%% ----------------------------------------------------------------------

%%% ----------------------------------------------------------------------

%  {3.6} ----------------------------------------------------------------------  {3.6}
\begin{cor}\label {3.6} {\rm(cf.} \cite [Corollary 4.2.(c)]{AM2}{\rm)}
Let $X$ be an $R$--module and $n$ be a non-negative integer. Then
the isomorphism $\emph{\Hom}_{R}(N,H^{n}_{\frak{a}}(X))\cong
\emph{\Ext}^{n}_{R}(N,X)$ holds in either of the following cases:
     \begin{itemize}
         \item[(i)]{$\emph{\Ext}^{j-i}_{R}(N, H^{i}_{\fa}(X))= 0$ for all $i, j$ with $0\leq i\leq n-1$ and $j= n, n+1;$}
          \item[(ii)]{$\emph{\Ext}^{i}_{R}(R/\fa, X)= 0$ for all $i$, $0\leq i\leq n-1.$}
     \end{itemize}
\end{cor}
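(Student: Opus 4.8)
The plan is to obtain both cases directly from Theorem \ref{3.5}, which yields the isomorphism $\Ext^{s}_{R}(N, H^{t}_{\fa}(X))\cong \Ext^{s+t}_{R}(N, X)$ once its three vanishing hypotheses hold. First I would specialize that theorem to $s= 0$ and $t= n$, so that its conclusion becomes exactly the desired $\Hom_{R}(N, H^{n}_{\fa}(X))\cong \Ext^{n}_{R}(N, X)$. The crucial observation is how the index ranges collapse under this specialization: in hypothesis (i) of Theorem \ref{3.5} the range $t< i\leq s+t$ is empty, in hypothesis (iii) the range $t+1\leq i< s+t$ is empty, and hypotheses (i) and (ii) read, respectively, $\Ext^{n-i}_{R}(N, H^{i}_{\fa}(X))= 0$ and $\Ext^{n+1-i}_{R}(N, H^{i}_{\fa}(X))= 0$ for all $i$ with $0\leq i< n$. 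Together these two are precisely the requirement $\Ext^{j-i}_{R}(N, H^{i}_{\fa}(X))= 0$ for $0\leq i\leq n-1$ and $j= n, n+1$, which is condition (i) of the corollary. Hence case (i) follows immediately.

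For case (ii) I would reduce to case (i) by first forcing the relevant local cohomology modules to vanish, using Proposition \ref{3.3} with the Serre subcategory $\mathcal{M}= 0$. I would first check that the zero subcategory is a Melkersson subcategory with respect to $\fa$: it is plainly Serre, and if $X$ is $\fa$--torsion with $0:_{X}\fa= 0$ then $X= 0$, since for any $0\neq x\in X$ a minimal power $\fa^{m}x= 0$ with $m\geq 1$ produces a nonzero element of $\fa^{m-1}x$ annihilated by $\fa$, so $0:_{X}\fa\neq 0$. Granting this, the hypothesis $\Ext^{i}_{R}(R/\fa, X)= 0$ for $0\leq i\leq n-1$ is exactly statement (ii) of Proposition \ref{3.3} with $\mathcal{M}= 0$ and with $n$ replaced by $n-1$; its statement (i) then gives $H^{i}_{\fa}(X)= 0$ for all $i$, $0\leq i\leq n-1$. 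With all these modules zero, every $\Ext$ occurring in condition (i) of the corollary vanishes trivially, and case (i) delivers the isomorphism.

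The computations are entirely routine, so the only point that deserves care — the main ``obstacle'', such as it is — is the bookkeeping of the index ranges when passing from Theorem \ref{3.5} to the corollary, namely verifying that the two nonempty ranges in hypotheses (i) and (ii) fuse into the single clean condition $j= n, n+1$ while the remaining ranges are empty. The second delicate point is confirming that $\mathcal{M}= 0$ genuinely qualifies as a Melkersson subcategory with respect to $\fa$, since it is this fact that licenses the use of Proposition \ref{3.3} to convert the $\Ext$--vanishing hypothesis of case (ii) into vanishing of the local cohomology modules $H^{i}_{\fa}(X)$.
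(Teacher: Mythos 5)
Your proposal is correct and follows essentially the same route as the paper: case (i) is Theorem \ref{3.5} with $s=0$, $t=n$ (with the empty index ranges noted), and case (ii) reduces to case (i) via Proposition \ref{3.3} applied to the zero subcategory, whose Melkersson property you verify explicitly where the paper leaves it implicit.
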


\begin{proof}
(i) Apply Theorem \ref {3.5} with $s= 0$ and $t= n$.

(ii) By Proposition \ref {3.3}, $H^{i}_{\fa}(X)= 0$ for all $i$,
$0\leq i\leq n- 1$. Now, use case (i).
\end{proof}

%%% ----------------------------------------------------------------------

%%% ----------------------------------------------------------------------

%   {3.7}   ----------------------------------------------------------------------  {3.7}
\begin{cor} \label {3.7}  Let $X$ be an $R$--module and $n, m$ be  non-negative
integers such that $n\leq m$. Assume also that $H^{i}_{\fa}(X)= 0$
for all $i$, $i\neq n$ {\rm(}resp. $0\leq i\leq n-1$ or $n+1\leq
i\leq m${\rm)}. Then we have $\emph{\Ext}^{i}_{R}(N,
H^{n}_{\frak{a}}(X))\cong \emph{\Ext}^{i+ n}_{R}(N, X)$ for all $i$,
$i\geq 0$ {\rm(}resp. $0\leq i\leq m-n${\rm)}.
\end{cor}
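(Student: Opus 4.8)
The plan is to obtain the statement as a direct specialization of Theorem~\ref{3.5}. Setting $s=i$ and $t=n$ in that theorem, its conclusion reads $\Ext^{i}_{R}(N,H^{n}_{\fa}(X))\cong\Ext^{i+n}_{R}(N,X)$, which is exactly the isomorphism we want. So the whole task reduces to checking that, under the vanishing hypotheses of the corollary, the three conditions (i)--(iii) of Theorem~\ref{3.5} hold with this choice of $s$ and $t$. I expect no genuine difficulty here; the only thing to watch is the index bookkeeping, making sure that every cohomological degree appearing in those conditions lies in a range where $H^{\bullet}_{\fa}(X)$ is assumed to vanish.

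First I would record the trivial but decisive observation that if $H^{k}_{\fa}(X)=0$ then $\Ext^{\ell}_{R}(N,H^{k}_{\fa}(X))=0$ for every $\ell$. With $s=i$ and $t=n$, the degrees $k$ occurring in conditions (i), (ii) and (iii) of Theorem~\ref{3.5} all satisfy either $0\leq k<n$ or $n<k\leq i+n$; in particular none of them equals $n$. Therefore, in the first (unrestricted) case, where $H^{k}_{\fa}(X)=0$ for every $k\neq n$, all three vanishing conditions hold automatically, and Theorem~\ref{3.5} delivers the isomorphism for every $i\geq 0$.

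For the parenthetical case I would argue the same way, the only added step being to confirm that the top of the index range stays inside the assumed vanishing region. Here $H^{k}_{\fa}(X)=0$ is assumed only for $0\leq k\leq n-1$ and for $n+1\leq k\leq m$, and the isomorphism is claimed only for $0\leq i\leq m-n$. The largest degree appearing in the conditions comes from (i), namely $k\leq i+n$, and the restriction $i\leq m-n$ gives $i+n\leq m$. Since every such $k$ with $n<k$ then satisfies $n+1\leq k\leq m$, while every $k$ with $k<n$ satisfies $0\leq k\leq n-1$, all the relevant local cohomology modules again vanish. Thus conditions (i)--(iii) hold and Theorem~\ref{3.5} applies.

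The main (and essentially only) obstacle is this last bounding step: one must verify that the constraint $i\leq m-n$ is precisely what keeps $i+n\leq m$, so that no degree appearing in (i)--(iii) escapes the range $\{0,\dots,n-1\}\cup\{n+1,\dots,m\}$. As a sanity check I would note that taking $i=0$ recovers $\Hom_{R}(N,H^{n}_{\fa}(X))\cong\Ext^{n}_{R}(N,X)$, consistent with the $s=0$ specialization used in Corollary~\ref{3.6}.
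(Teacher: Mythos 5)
Your proposal is correct and follows exactly the paper's own proof: the paper likewise disposes of Corollary \ref{3.7} by applying Theorem \ref{3.5} with $s=i$ and $t=n$, the vanishing hypotheses making conditions (i)--(iii) hold trivially. Your explicit index check (in particular that $i\leq m-n$ forces every degree $k$ with $n<k\leq i+n$ to satisfy $n+1\leq k\leq m$) is precisely the bookkeeping the paper leaves implicit.
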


\begin{proof}
For all $i$, $i\geq 0$ {\rm(}resp. $0\leq i\leq m-n${\rm)}, apply
Theorem \ref {3.5} with $s= i$ and $t= n$.
\end{proof}

%%% ----------------------------------------------------------------------

%%% ----------------------------------------------------------------------

%   {3.8}   ----------------------------------------------------------------------  {3.8}
\begin{cor} \label {3.8} {\rm(cf.} \cite [Proposition 3.1]{DY3}{\rm)}
Let $X$ be an $R$--module and $n$ be a non-negative integer such
that $H^{i}_{\fa}(X)= 0$ for all $i$, $i\neq n.$ Then we have
$\mu^{i}(\fp, H^{n}_{\fa}(X))= \mu^{i+n}(\fp, X)$ for all $i\geq 0$
and all $\fp\in V(\fa).$
\end{cor}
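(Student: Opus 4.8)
The plan is to reduce the assertion to Corollary \ref{3.7} by localizing at $\fp$ and choosing the test module $N$ to be a residue field. Recall that, for an $R$--module $M$, a prime ideal $\fp$, and an integer $i\geq 0$, the Bass number $\mu^{i}(\fp, M)$ is the dimension of the $k(\fp)$--vector space $\Ext^{i}_{R_{\fp}}(k(\fp), M_{\fp})$, where $k(\fp)= R_{\fp}/\fp R_{\fp}$ is the residue field at $\fp$. So I would fix $\fp\in V(\fa)$ and carry out the entire computation over the Noetherian local ring $R_{\fp}$.

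The first step is to check that the hypotheses of Corollary \ref{3.7} survive localization. Since local cohomology commutes with localization (see \cite{BSh}), one has $H^{i}_{\fa R_{\fp}}(X_{\fp})\cong (H^{i}_{\fa}(X))_{\fp}= 0$ for all $i\neq n$, so $X_{\fp}$ satisfies the vanishing hypothesis of Corollary \ref{3.7} over $R_{\fp}$. Moreover, because $\fp\in V(\fa)$ we have $\fa R_{\fp}\subseteq \fp R_{\fp}$, and thus $k(\fp)$ is a finitely generated $R_{\fp}$--module annihilated by $\fa R_{\fp}$; that is, $k(\fp)$ is a finite $\fa R_{\fp}$--torsion module and may legitimately play the role of $N$. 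It is here that the restriction $\fp\in V(\fa)$ is essential: otherwise $\fa R_{\fp}= R_{\fp}$ and $k(\fp)$ fails to be $\fa R_{\fp}$--torsion.

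Applying the first (unrestricted) case of Corollary \ref{3.7} over $R_{\fp}$, with $X_{\fp}$ in place of $X$ and $k(\fp)$ in place of $N$, then yields an isomorphism of $R_{\fp}$--modules
$$\Ext^{i}_{R_{\fp}}(k(\fp), H^{n}_{\fa R_{\fp}}(X_{\fp}))\cong \Ext^{i+ n}_{R_{\fp}}(k(\fp), X_{\fp})$$
for every $i\geq 0$. Rewriting the left-hand side via $H^{n}_{\fa R_{\fp}}(X_{\fp})\cong (H^{n}_{\fa}(X))_{\fp}$, I note that both sides are annihilated by $\fp R_{\fp}$ and hence are $k(\fp)$--vector spaces; the displayed isomorphism is $R_{\fp}$--linear and therefore $k(\fp)$--linear, so it preserves $k(\fp)$--dimension. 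Reading off the dimensions of the two sides gives exactly $\mu^{i}(\fp, H^{n}_{\fa}(X))= \mu^{i+ n}(\fp, X)$, as claimed.

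I do not expect a genuine difficulty here, since the argument is a chain of standard reductions. The only points that demand care are the verification that $k(\fp)$ is a finite $\fa R_{\fp}$--torsion $R_{\fp}$--module (so that Corollary \ref{3.7} is genuinely applicable over $R_{\fp}$) and the correct use of the flat base change $(H^{i}_{\fa}(X))_{\fp}\cong H^{i}_{\fa R_{\fp}}(X_{\fp})$ to transport both the vanishing hypothesis and the target module across the localization.
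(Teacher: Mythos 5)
Your proposal is correct and follows essentially the same route as the paper's own proof: localize at $\fp\in V(\fa)$, use flat base change to transfer the vanishing hypothesis to $R_{\fp}$, and apply Corollary \ref{3.7} over $R_{\fp}$ with $N= R_{\fp}/\fp R_{\fp}$ to read off the Bass numbers. Your write-up merely makes explicit two points the paper leaves tacit, namely that $k(\fp)$ is a finite $\fa R_{\fp}$--torsion module (which is where $\fp\in V(\fa)$ is used) and that the resulting $R_{\fp}$--isomorphism preserves $k(\fp)$--dimensions.
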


\begin{proof} Let $\fp\in V(\fa).$ By assumption, $H^{i}_{\fa R_{\fp}}(X_{\fp})= 0$ for all  $i$, $i\neq
n$; so that $\Ext^{i}_{R_{\fp}}(R_{\fp}/\fp R_{\fp},H^{n}_{\fa
R_{\fp}}(X_{\fp}))\cong \Ext^{i+n}_{R_{\fp}}(R_{\fp}/\fp
R_{\fp},X_{\fp})$ for all $i\geq 0$ by Corollary \ref {3.7}. Hence
$\mu^{i}(\fp, H^{n}_{\fa}(X))= \mu^{i+n}(\fp, X)$ for all $i\geq 0$.
\end{proof}

%%% ----------------------------------------------------------------------

%%% ----------------------------------------------------------------------

%   {3.9}   ----------------------------------------------------------------------  {3.9}
\begin{cor} \label {3.9} For an arbitrary $R$--module $X$, the following statements hold
true.
               \begin{enumerate}
                   \item[\emph{(i)}]{If $\emph{\cd}(\fa, X)= 0$, then $\emph{\Ext}^{i}_{R}(N, \Gamma_{\fa}(X))\cong
                   \emph{\Ext}^{i}_{R}(N, X)$ for all $i\geq 0$.}
                   \item[\emph{(ii)}]{If $\emph{\cd}(\fa, X)= 1$, then $\emph{\Ext}^{i}_{R}(N, H^{1}_{\fa}(X))\cong
                   \emph{\Ext}^{i+ 1}_{R}(N, X/\Gamma_{\fa}(X))$ for all $i\geq 0$.}
                   \item[\emph{(iii)}]{If $\emph{\cd}(\fa, X)= 2$, then $\emph{\Ext}^{i}_{R}(N, H^{2}_{\fa}(X))\cong
                   \emph{\Ext}^{i +2}_{R}(N, D_{\fa}(X))$ for all $i\geq 0$.}
               \end{enumerate}
\end{cor}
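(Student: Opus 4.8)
The plan is to reduce all three parts to Corollary \ref{3.7}, in exactly the way Corollary \ref{2.9} is reduced to Corollary \ref{2.8}: in each case the only task is to exhibit a module whose local cohomology with respect to $\fa$ is concentrated in a single cohomological degree, and then invoke Corollary \ref{3.7}. For part (i) no preparation is needed, since the hypothesis $\cd(\fa, X)= 0$ says precisely that $H^{i}_{\fa}(X)= 0$ for all $i\neq 0$. I would therefore apply Corollary \ref{3.7} directly to $X$ with $n= 0$, obtaining $\Ext^{i}_{R}(N, \Gamma_{\fa}(X))= \Ext^{i}_{R}(N, H^{0}_{\fa}(X))\cong \Ext^{i}_{R}(N, X)$ for all $i\geq 0$.

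For part (ii) I would replace $X$ by $Y:= X/\Gamma_{\fa}(X)$. The key point is that $Y$ is $\fa$--torsion free, so $\Gamma_{\fa}(Y)= 0$, while the long exact sequence of local cohomology attached to $0\to \Gamma_{\fa}(X)\to X\to Y\to 0$, together with the vanishing $H^{i}_{\fa}(\Gamma_{\fa}(X))= 0$ for all $i\geq 1$ (as $\Gamma_{\fa}(X)$ is $\fa$--torsion), yields $H^{i}_{\fa}(Y)\cong H^{i}_{\fa}(X)$ for every $i\geq 1$. Combined with $\cd(\fa, X)= 1$, this shows that $H^{i}_{\fa}(Y)= 0$ for all $i\neq 1$ and $H^{1}_{\fa}(Y)\cong H^{1}_{\fa}(X)$. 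Applying Corollary \ref{3.7} to $Y$ with $n= 1$ then gives $\Ext^{i}_{R}(N, H^{1}_{\fa}(X))\cong \Ext^{i}_{R}(N, H^{1}_{\fa}(Y))\cong \Ext^{i+1}_{R}(N, Y)= \Ext^{i+1}_{R}(N, X/\Gamma_{\fa}(X))$ for all $i\geq 0$.

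For part (iii) I would instead pass to the ideal transform $D_{\fa}(X)$. By \cite[Corollary 2.2.8]{BSh} one has $H^{0}_{\fa}(D_{\fa}(X))= H^{1}_{\fa}(D_{\fa}(X))= 0$ and $H^{i}_{\fa}(D_{\fa}(X))\cong H^{i}_{\fa}(X)$ for all $i\geq 2$ — the same input already used in the proof of Corollary \ref{2.9}(iii). With $\cd(\fa, X)= 2$ this forces $H^{i}_{\fa}(D_{\fa}(X))= 0$ for all $i\neq 2$ and $H^{2}_{\fa}(D_{\fa}(X))\cong H^{2}_{\fa}(X)$, so Corollary \ref{3.7} with $n= 2$ delivers $\Ext^{i}_{R}(N, H^{2}_{\fa}(X))\cong \Ext^{i+2}_{R}(N, D_{\fa}(X))$ for all $i\geq 0$. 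The only genuinely non-formal ingredient across the three parts is recording, in parts (ii) and (iii), that the surgery performed on $X$ annihilates exactly the low-degree local cohomology while preserving the surviving module — the first via the torsion-free quotient, the second via the cited properties of the ideal transform; once these concentration statements are in place, every isomorphism is a direct appeal to Corollary \ref{3.7}, so I expect no serious obstacle beyond bookkeeping.
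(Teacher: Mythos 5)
Your proposal is correct and is exactly the paper's argument: the paper's proof simply says to combine Corollary \ref{3.7} with the reductions used in Corollary \ref{2.9} (direct application for (i), passage to $X/\Gamma_{\fa}(X)$ for (ii), and passage to $D_{\fa}(X)$ via \cite[Corollary 2.2.8]{BSh} for (iii)), which is precisely what you spell out. Your write-up just makes explicit the concentration isomorphisms $H^{1}_{\fa}(X/\Gamma_{\fa}(X))\cong H^{1}_{\fa}(X)$ and $H^{2}_{\fa}(D_{\fa}(X))\cong H^{2}_{\fa}(X)$ that the paper leaves implicit.
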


\begin{proof} By considering Corollary \ref {3.7}, this is similar
to that of Corollary \ref {2.9}.
\end{proof}

%%% ----------------------------------------------------------------------
%%% ----------------------------------------------------------------------
%%% ----------------------------------------------------------------------

\section{Cofinite modules}

%%% ----------------------------------------------------------------------
%%% ----------------------------------------------------------------------
%%% ----------------------------------------------------------------------
We first introduce the class of cofinite modules with respect to an
ideal and a Serre subcategory of the category of $R$-modules.

%%% ----------------------------------------------------------------------

%%% ----------------------------------------------------------------------

%  {4.1}  ----------------------------------------------------------------------  {4.1}
\begin{defn} \label {4.1} Let $\fa$ be an ideal of
$R$, $X$ be an $R$--module and $\mathcal{S}$ be a Serre subcategory
of $\mathcal{C}(R)$. We say that $X$ is {\it $\mathcal{S}$--cofinite
with respect to the ideal $\fa$} if $\emph{\Supp}_{R}(X)\subseteq
V(\fa)$ and $\emph{\Ext}^{i}_{R}(R/\fa, X)$ is in $\mathcal{S}$ for
all $i\geq 0.$ We will denote this concept by {\it $(\mathcal{S},
\fa)$--cofinite}.
\end{defn}

Note that when $\mathcal{S}$ is $\mathcal{C}_{f.g}(R)$ (resp.
$\mathcal{C}_{w.l}(R)$), $X$ is $(\mathcal{S}, \fa)$--cofinite
exactly when $X$ is $\fa$--cofinite (resp. $\fa$--weakly cofinite).

%%% ----------------------------------------------------------------------

%%% ----------------------------------------------------------------------

%  {4.2}  ----------------------------------------------------------------------  {4.2}
\begin{thm} \label {4.2}  Let $X$ be an $R$--module and $n$ be a non-negative integer
such that $H^{i}_{\fa}(X)$ is $(\mathcal{S}, \fa)$--cofinite for all
$i$, $i\neq n$. Then the following statements are equivalent.
               \begin{itemize}
                   \item[(i)]{$\emph{\Ext}^{i}_{R}(R/\fa, X)$ is in $\mathcal{S}$ for all $i\geq 0$.}
                   \item[(ii)]{$\emph{\Ext}^{i}_{R}(R/\fa, X)$ is in $\mathcal{S}$ for all $i\geq n$.}
                   \item[(iii)]{$H^{n}_{\fa}(X)$ is $(\mathcal{S}, \fa)$--cofinite.}
               \end{itemize}
\end{thm}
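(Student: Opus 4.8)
The plan is to establish the cyclic chain (i) $\Rightarrow$ (ii) $\Rightarrow$ (iii) $\Rightarrow$ (i), applying the results of Section 2 with $N = R/\fa$ (which is indeed a finite $\fa$--torsion module). The guiding observation is that the standing hypothesis ``$H^{i}_{\fa}(X)$ is $(\mathcal{S}, \fa)$--cofinite for all $i \neq n$'' says exactly that $\Ext^{j}_{R}(R/\fa, H^{i}_{\fa}(X))$ lies in $\mathcal{S}$ for every $j \geq 0$ and every $i \neq n$. In terms of the Grothendieck spectral sequence $E^{p,q}_{2} = \Ext^{p}_{R}(R/\fa, H^{q}_{\fa}(X)) \Rightarrow \Ext^{p+q}_{R}(R/\fa, X)$, every term outside the single row $q = n$ already belongs to $\mathcal{S}$, and this is precisely the kind of input Theorems \ref{2.1} and \ref{2.3} convert into conclusions.

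The implication (i) $\Rightarrow$ (ii) is immediate, as (ii) requests only a subfamily of the conclusions of (i). For (iii) $\Rightarrow$ (i), I would observe that once (iii) is assumed, $H^{n}_{\fa}(X)$ is $(\mathcal{S}, \fa)$--cofinite as well, so $\Ext^{t-r}_{R}(R/\fa, H^{r}_{\fa}(X))$ is in $\mathcal{S}$ for \emph{every} $r$ with $0 \leq r \leq t$, with no exceptional degree remaining. Fixing $t \geq 0$ and invoking Theorem \ref{2.1} (with $N = R/\fa$) then gives $\Ext^{t}_{R}(R/\fa, X) \in \mathcal{S}$, which is (i).

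The substance of the argument is (ii) $\Rightarrow$ (iii). Since $H^{n}_{\fa}(X)$ is $\fa$--torsion, $\Supp_{R}(H^{n}_{\fa}(X)) \subseteq V(\fa)$ automatically, so by Definition \ref{4.1} it remains only to show $\Ext^{s}_{R}(R/\fa, H^{n}_{\fa}(X)) \in \mathcal{S}$ for each $s \geq 0$. I would fix $s$ and check the three hypotheses of Theorem \ref{2.3} for $N = R/\fa$ and $t = n$: hypothesis (i) there, namely $\Ext^{s+n}_{R}(R/\fa, X) \in \mathcal{S}$, follows from the present (ii) since $s + n \geq n$; hypotheses (ii) and (iii) there involve $\Ext^{*}_{R}(R/\fa, H^{i}_{\fa}(X))$ only for $0 \leq i < n$ and for $n+1 \leq i < s+n$, and in both ranges $i \neq n$, so the cofiniteness assumption supplies the required membership. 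Theorem \ref{2.3} then yields $\Ext^{s}_{R}(R/\fa, H^{n}_{\fa}(X)) \in \mathcal{S}$, completing (iii).

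I do not expect a serious obstacle: the theorem is essentially a packaging of Theorems \ref{2.1} and \ref{2.3}. The only point demanding genuine care is the index bookkeeping in (ii) $\Rightarrow$ (iii), where one must confirm that the degrees $i$ occurring in the hypotheses of Theorem \ref{2.3} all avoid the single excluded value $i = n$; this is exactly where the hypothesis that $n$ is the unique degree of possible non--cofiniteness is used, and it is the crux that makes the two spectral sequence theorems applicable.
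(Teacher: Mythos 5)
Your proposal is correct and follows exactly the paper's own route: the cyclic chain (i) $\Rightarrow$ (ii) $\Rightarrow$ (iii) $\Rightarrow$ (i), with (ii) $\Rightarrow$ (iii) obtained by applying Theorem \ref{2.3} with $N = R/\fa$, $s$ arbitrary and $t = n$, and (iii) $\Rightarrow$ (i) by Theorem \ref{2.1} with $N = R/\fa$. The extra detail you supply (the index bookkeeping showing the ranges in Theorem \ref{2.3} avoid $i = n$, and the automatic support condition for $H^{n}_{\fa}(X)$) is exactly what the paper leaves implicit.
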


\begin{proof} (i) $\Rightarrow$ (ii). This is clear.

(ii) $\Rightarrow$ (iii).  For all $i\geq 0$, apply Theorem \ref
{2.3} with $N= R/\fa$, $s= i$ and $t= n$.

(iii) $\Rightarrow$ (i).  Apply Theorem \ref {2.1} with $N= R/\fa$.
\end{proof}

%%% ----------------------------------------------------------------------

%%% ----------------------------------------------------------------------

%  {4.3}  ----------------------------------------------------------------------  {4.3}
As an immediate result, the following corollary recovers and
improves \cite [Proposition 2.5]{MV}, \cite [Proposition 3.11]{Mel}
and \cite [Theorem 3.1]{DM2}.

\begin{cor} \label {4.3} {\rm(cf.} \cite [Proposition 2.5]{MV},
 \cite [Proposition 3.11]{Mel} and  \cite [Theorem 3.1]{DM2}{\rm)}
Let $X$ be an $R$--module and $n$ be a non-negative integer such
that $H^{i}_{\fa}(X)$ is $\fa$--cofinite {\rm(}resp. $\fa$--weakly
cofinite{\rm)} for all $i$, $i\neq n$. Then the following statements
are equivalent.
               \begin{itemize}
                   \item[(i)]{$\emph{\Ext}^{i}_{R}(R/\fa, X)$ is finite {\rm(}resp. weakly Laskerian{\rm)} for all $i\geq 0$.}
                   \item[(ii)]{$\emph{\Ext}^{i}_{R}(R/\fa,X)$ is finite {\rm(}resp. weakly Laskerian{\rm)} for all $i\geq n$.}
                   \item[(iii)]{$H^{n}_{\fa}(X)$ is $\fa$--cofinite {\rm(}resp. $\fa$--weakly cofinite{\rm)}.}
               \end{itemize}
\end{cor}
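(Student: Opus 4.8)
The plan is to obtain this statement as a direct specialization of Theorem \ref{4.2}, carried out twice for two particular choices of the Serre subcategory $\mathcal{S}$. First I would record that both $\mathcal{C}_{f.g}(R)$ and $\mathcal{C}_{w.l}(R)$ are Serre subcategories of $\mathcal{C}(R)$: closure of the finite modules under submodules, quotients and extensions is classical, and closure of the weakly Laskerian modules under these three operations is built into the definition recalled just after Theorem \ref{2.1} (it is exactly the content of \cite{DM1}). This legitimizes applying Theorem \ref{4.2} with $\mathcal{S}= \mathcal{C}_{f.g}(R)$ and with $\mathcal{S}= \mathcal{C}_{w.l}(R)$.

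Next I would invoke the dictionary provided by the remark following Definition \ref{4.1}: for $\mathcal{S}= \mathcal{C}_{f.g}(R)$ the notion of $(\mathcal{S}, \fa)$--cofiniteness coincides with $\fa$--cofiniteness, and for $\mathcal{S}= \mathcal{C}_{w.l}(R)$ it coincides with $\fa$--weak cofiniteness. Under this translation, the hypothesis that $H^{i}_{\fa}(X)$ is $\fa$--cofinite (resp. $\fa$--weakly cofinite) for all $i\neq n$ is precisely the hypothesis of Theorem \ref{4.2}; condition (iii) about $H^{n}_{\fa}(X)$ matches verbatim; and membership of $\emph{\Ext}^{i}_{R}(R/\fa, X)$ in $\mathcal{S}$ becomes finiteness (resp. weak Laskerianness) of that module. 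Hence statements (i), (ii) and (iii) of the corollary are exactly statements (i), (ii) and (iii) of Theorem \ref{4.2} read off through this dictionary.

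Applying Theorem \ref{4.2} once for $\mathcal{S}= \mathcal{C}_{f.g}(R)$ then gives the equivalence of the finite versions, and once for $\mathcal{S}= \mathcal{C}_{w.l}(R)$ gives the equivalence of the weakly Laskerian versions, disposing of both cases simultaneously. I do not expect any genuine obstacle, since the entire mathematical content is already carried by Theorem \ref{4.2} (which in turn rests on the spectral sequence arguments of Theorems \ref{2.1} and \ref{2.3}). The only point meriting a moment's care is confirming that $\mathcal{C}_{w.l}(R)$ really is a Serre subcategory, so that Theorem \ref{4.2} is applicable in the weakly Laskerian case; this is what permits the parenthetical resp.\ version to be handled on exactly the same footing as the finite one.
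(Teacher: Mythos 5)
Your proposal is correct and follows exactly the paper's route: the paper presents Corollary \ref{4.3} as an immediate consequence of Theorem \ref{4.2}, obtained by specializing $\mathcal{S}$ to $\mathcal{C}_{f.g}(R)$ and to $\mathcal{C}_{w.l}(R)$ and translating $(\mathcal{S}, \fa)$--cofiniteness via the remark after Definition \ref{4.1}, which is precisely your argument. Your explicit check that $\mathcal{C}_{w.l}(R)$ is a Serre subcategory is a point the paper leaves tacit, but it is the same proof.
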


%%% ----------------------------------------------------------------------

%%% ----------------------------------------------------------------------

%  {4.4}  ----------------------------------------------------------------------  {4.4}
\begin{thm} \label {4.4} Suppose that $X$ is an
$R$--module and $n$ is a non-negative integer such that
               \begin{itemize}
                   \item[(i)]{$H^{i}_{\fa}(X)$ is $(\mathcal{S}, \fa)$--cofinite for all $i$, $0\leq i\leq n- 1$, and}
                   \item[(ii)]{$\emph{\Ext}^{1+n}_{R}(N, X)$ is in $\mathcal{S}$.}
               \end{itemize}
Then $\emph{\Ext}^{1}_{R}(N, H^{n}_{\fa}(X))$ is in $\mathcal{S}$.
\end{thm}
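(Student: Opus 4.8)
The plan is to read off the conclusion as the special case $s=1$, $t=n$ of Theorem \ref{2.3}. First I would record what the three hypotheses of Theorem \ref{2.3} become under this substitution. Condition (i) there asks that $\Ext^{s+t}_{R}(N, X)=\Ext^{n+1}_{R}(N, X)$ lie in $\mathcal{S}$, which is precisely hypothesis (ii) of the present statement. Condition (iii) there concerns the indices $i$ with $t+1\leq i<s+t$, that is $n+1\leq i<n+1$; this range is empty, so condition (iii) holds vacuously. Thus everything reduces to verifying condition (ii) of Theorem \ref{2.3}, namely that $\Ext^{s+t+1-i}_{R}(N, H^{i}_{\fa}(X))=\Ext^{n+2-i}_{R}(N, H^{i}_{\fa}(X))$ belongs to $\mathcal{S}$ for every $i$ with $0\leq i<n$.

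Next I would extract this from hypothesis (i). By Definition \ref{4.1}, for each $i$ with $0\leq i\leq n-1$ the module $H^{i}_{\fa}(X)$ satisfies $\Ext^{j}_{R}(R/\fa, H^{i}_{\fa}(X))\in\mathcal{S}$ for all $j\geq 0$, in particular for $j=n+2-i$. The subtlety is that the Ext groups appearing in Theorem \ref{2.3} are taken against the finite $\fa$--torsion module $N$, not against $R/\fa$, so the one genuine step is the passage from $R/\fa$ to $N$.

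The hard part --- in fact the only part requiring an argument --- is the following translation: if $M$ is an $\fa$--torsion module with $\Ext^{j}_{R}(R/\fa, M)\in\mathcal{S}$ for all $j$, and $N$ is a finitely generated $\fa$--torsion module, then $\Ext^{j}_{R}(N, M)\in\mathcal{S}$ for all $j$. I would prove this in two stages. First, for a finite $R/\fa$--module $L$ I would argue by induction on $j$: the case $j=0$ follows because a surjection $(R/\fa)^{m}\twoheadrightarrow L$ induces an injection $\Hom_{R}(L, M)\hookrightarrow\Hom_{R}(R/\fa, M)^{m}\in\mathcal{S}$, and $\mathcal{S}$ is closed under submodules; for the inductive step I would take a presentation $0\to L'\to(R/\fa)^{m}\to L\to 0$ with $L'$ again a finite $R/\fa$--module, and use the long exact sequence of $\Ext_{R}(-, M)$ to sandwich $\Ext^{j+1}_{R}(L, M)$ between a quotient of $\Ext^{j}_{R}(L', M)$ (in $\mathcal{S}$ by the inductive hypothesis applied to $L'$) and a submodule of $\Ext^{j+1}_{R}(R/\fa, M)^{m}\in\mathcal{S}$, whence $\Ext^{j+1}_{R}(L, M)\in\mathcal{S}$ by closure under extensions. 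Second, for a general finite $\fa$--torsion $N$ there is some $t$ with $\fa^{t}N=0$, and the filtration $N\supseteq\fa N\supseteq\cdots\supseteq\fa^{t}N=0$ has successive quotients that are finite $R/\fa$--modules; splicing the long exact sequences of $\Ext_{R}(-, M)$ along this filtration, together with closure of $\mathcal{S}$ under extensions, yields $\Ext^{j}_{R}(N, M)\in\mathcal{S}$ for all $j$.

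Finally I would apply this translation with $M=H^{i}_{\fa}(X)$; since local cohomology modules are $\fa$--torsion, the support condition in $(\mathcal{S},\fa)$--cofiniteness is automatic and only the Ext condition is being used. This gives condition (ii) of Theorem \ref{2.3}. With all three hypotheses of Theorem \ref{2.3} verified at $s=1$, $t=n$, I conclude that $\Ext^{1}_{R}(N, H^{n}_{\fa}(X))$ is in $\mathcal{S}$, as desired.
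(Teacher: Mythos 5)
Your proof is correct and follows essentially the same route as the paper: the paper's proof also consists of applying Theorem \ref{2.3} with $s=1$, $t=n$, after converting hypothesis (i) from conditions on $\Ext_{R}(R/\fa,-)$ into conditions on $\Ext_{R}(N,-)$. The only difference is that the paper performs this conversion by citing \cite[Proposition 3.4]{HV}, which is precisely the translation lemma you single out as the one genuine step and then prove from scratch via the two-stage filtration argument.
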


\begin{proof} Consider \cite [Proposition 3.4]{HV} and apply Theorem \ref
{2.3} with $s= 1$ and $t= n$.
\end{proof}

%%% ----------------------------------------------------------------------

%%% ----------------------------------------------------------------------

%  {4.5}  ----------------------------------------------------------------------  {4.5}
The following result is an application of the above theorem.

\begin{cor} \label {4.5} {\rm(cf.} \cite [Theorem A]{DY1} and \cite [Corollary 2.7]{DM1}{\rm)}
Let $X$ be an $R$--module and $n$ be a non-negative integer. Assume
also that
               \begin{itemize}
                   \item[(i)]{$H^{i}_{\fa}(X)$ is $\fa$--cofinite {\rm(}resp. $\fa$--weakly cofinite{\rm)} for all $i$, $0\leq i\leq n- 1$, and}
                   \item[(ii)]{$\emph{\Ext}^{1+n}_{R}(N, X)$ is finite {\rm(}resp. weakly Laskerian{\rm)}.}
               \end{itemize}
Then $\emph{\Ext}^{1}_{R}(N, H^{n}_{\fa}(X))$ is finite {\rm(}resp.
weakly Laskerian{\rm)}.
\end{cor}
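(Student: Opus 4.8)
The plan is to deduce Corollary \ref{4.5} directly from Theorem \ref{4.4} by specializing the Serre subcategory $\mathcal{S}$ to the two concrete classes at hand. For the finite case I would take $\mathcal{S}= \mathcal{C}_{f.g}(R)$, and for the weakly Laskerian case $\mathcal{S}= \mathcal{C}_{w.l}(R)$; both are Serre subcategories of $\mathcal{C}(R)$, so Theorem \ref{4.4} applies verbatim. The first thing to check is that the hypotheses of Theorem \ref{4.4} are exactly the specializations of the hypotheses given here: condition (i) says each $H^{i}_{\fa}(X)$ for $0\leq i\leq n-1$ is $\fa$--cofinite (resp. $\fa$--weakly cofinite), and by the remark following Definition \ref{4.1} this is precisely the statement that $H^{i}_{\fa}(X)$ is $(\mathcal{S}, \fa)$--cofinite for $\mathcal{S}= \mathcal{C}_{f.g}(R)$ (resp. $\mathcal{C}_{w.l}(R)$). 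Likewise condition (ii), that $\Ext^{1+n}_{R}(N, X)$ is finite (resp. weakly Laskerian), is exactly the assertion that this module lies in $\mathcal{S}$.

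With the dictionary in place, the proof is then a one-line invocation: apply Theorem \ref{4.4} with the chosen $\mathcal{S}$ to conclude that $\Ext^{1}_{R}(N, H^{n}_{\fa}(X))$ is in $\mathcal{S}$, i.e. is finite (resp. weakly Laskerian). I would handle the finite and weakly Laskerian cases in parallel, noting only the substitution of the appropriate Serre class, rather than writing out two separate arguments.

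The one point that genuinely needs to be verified, and which I expect to be the only real obstacle, is that $\mathcal{C}_{w.l}(R)$ is in fact a Serre subcategory of $\mathcal{C}(R)$ — that the class of weakly Laskerian modules is closed under submodules, quotients, and extensions — since Theorem \ref{4.4} is stated only for a Serre subcategory $\mathcal{S}$. For $\mathcal{C}_{f.g}(R)$ this is standard. For $\mathcal{C}_{w.l}(R)$ this closure is known from \cite{DM1}, where weakly Laskerian modules are introduced, so I would simply cite it rather than reprove it. Once that is granted, both cases of the corollary follow immediately, and the proof reduces to the single sentence recorded by the authors: consider \cite[Proposition 3.4]{HV} for the relevant vanishing and apply Theorem \ref{4.4} with the appropriate Serre class.
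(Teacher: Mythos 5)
Your proposal is correct and takes essentially the same route as the paper: the authors state Corollary \ref{4.5} as an immediate application of Theorem \ref{4.4} with no written proof, and your argument is exactly that specialization, taking $\mathcal{S}=\mathcal{C}_{f.g}(R)$ (resp.\ $\mathcal{S}=\mathcal{C}_{w.l}(R)$) and translating hypotheses via the remark after Definition \ref{4.1}. Your extra check that $\mathcal{C}_{w.l}(R)$ is a Serre subcategory (which is proved in \cite{DM1}) is a point the paper takes for granted throughout, so it is a reasonable thing to note but not a departure from the paper's argument.
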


%%% ----------------------------------------------------------------------

%%% ----------------------------------------------------------------------

%  {4.6}  ----------------------------------------------------------------------  {4.6}
\begin{thm} \label {4.6} Let $X$ be an $R$--module and
$n$ be a non-negative integer such that $\emph{\Ext}^{n+1}_{R}(N,
X)$ and $\emph{\Ext}^{n+2}_{R}(N, X)$ are in $\mathcal{S}$, and
$H^{i}_{\fa}(X)$ is $(\mathcal{S}, \fa)$--cofinite for all $i$,
$0\leq i< n.$ Then the following statements are equivalent.
              \begin{itemize}
                 \item[(i)]{$\emph{\Hom}_{R}(N, H^{n+1}_{\fa}(X))$ is in $\mathcal{S}$.}
                 \item[(ii)]{$\emph{\Ext}^{2}_{R}(N, H^{n}_{\fa}(X))$ is in $\mathcal{S}$.}
              \end{itemize}
\end{thm}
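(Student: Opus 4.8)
The plan is to prove both implications by a single mechanism: two applications of Theorem \ref{2.3}, one with $s=0$, $t=n+1$ and one with $s=2$, $t=n$. The key observation is that $\Hom_R(N,H^{n+1}_{\fa}(X))=\Ext^0_R(N,H^{n+1}_{\fa}(X))$ and $\Ext^2_R(N,H^n_{\fa}(X))$ occupy the positions $(p,q)=(0,n+1)$ and $(2,n)$ of the Grothendieck spectral sequence $E^{p,q}_2=\Ext^p_R(N,H^q_{\fa}(X))\Longrightarrow\Ext^{p+q}_R(N,X)$, both of total degree $n+1$, and that each of these two statements is precisely the single residual hypothesis of Theorem \ref{2.3} that is needed in order to deduce the other.

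First I would reduce the cofiniteness hypothesis to a statement about $\Ext_R(N,-)$. Since $H^i_{\fa}(X)$ is $(\mathcal{S},\fa)$--cofinite for all $i<n$, we have $\Ext^j_R(R/\fa,H^i_{\fa}(X))\in\mathcal{S}$ for all $j\geq0$ and all $i<n$; invoking \cite[Proposition 3.4]{HV} (as in the proof of Theorem \ref{4.4}) this upgrades to $\Ext^j_R(N,H^i_{\fa}(X))\in\mathcal{S}$ for all $j\geq0$ and all $i<n$, using that $N$ is finite $\fa$--torsion. This one reduction disposes of every term of the spectral sequence lying in the columns $q<n$.

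For (ii)$\Rightarrow$(i), apply Theorem \ref{2.3} with $s=0$ and $t=n+1$: its hypothesis (i) becomes $\Ext^{n+1}_R(N,X)\in\mathcal{S}$ (given); its hypothesis (ii) requires $\Ext^{n+2-i}_R(N,H^i_{\fa}(X))\in\mathcal{S}$ for $0\leq i\leq n$, which holds for $i<n$ by the reduction above and for $i=n$ is exactly assumption (ii); and its hypothesis (iii) is vacuous, since its index range $n+2\leq i<n+1$ is empty. The conclusion is then $\Ext^0_R(N,H^{n+1}_{\fa}(X))\in\mathcal{S}$, that is, (i). For (i)$\Rightarrow$(ii), apply Theorem \ref{2.3} with $s=2$ and $t=n$: its hypothesis (i) becomes $\Ext^{n+2}_R(N,X)\in\mathcal{S}$ (given); its hypothesis (ii) requires $\Ext^{n+3-i}_R(N,H^i_{\fa}(X))\in\mathcal{S}$ for $0\leq i<n$, which holds by the reduction; and its hypothesis (iii) has index range $n+1\leq i<n+2$, i.e. the single value $i=n+1$, requiring $\Ext^0_R(N,H^{n+1}_{\fa}(X))=\Hom_R(N,H^{n+1}_{\fa}(X))\in\mathcal{S}$, which is precisely assumption (i). The conclusion is $\Ext^2_R(N,H^n_{\fa}(X))\in\mathcal{S}$, that is, (ii).

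The argument carries no genuine obstacle once Theorem \ref{2.3} is available; the only points demanding care are the index bookkeeping—verifying that in each of the two applications the hypotheses of Theorem \ref{2.3} collapse to exactly the given data together with the complementary statement, and in particular that the range in hypothesis (iii) shrinks to an empty set in one case and to the single value $i=n+1$ in the other—and the routine but essential passage from $R/\fa$ to the finite $\fa$--torsion module $N$ via \cite[Proposition 3.4]{HV}.
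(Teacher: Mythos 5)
Your proof is correct and follows exactly the paper's own route: both implications come from Theorem \ref{2.3} with the same parameter choices the paper uses ($s=0$, $t=n+1$ for (ii)$\Rightarrow$(i) and $s=2$, $t=n$ for (i)$\Rightarrow$(ii)), together with \cite[Proposition 3.4]{HV} to pass from the $R/\fa$--cofiniteness hypothesis to the needed statements about $\Ext_{R}(N,-)$. Your write-up simply makes explicit the index bookkeeping that the paper's two-line proof leaves to the reader.
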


\begin{proof}
(i) $\Rightarrow$ (ii).  Consider \cite [Proposition 3.4]{HV} and
apply Theorem \ref {2.3} with $s= 2$ and $t= n$.

(ii) $\Rightarrow$ (i).  Again consider \cite [Proposition 3.4]{HV}
and apply Theorem \ref {2.3} with  $s= 0$ and $t= n+1$.
\end{proof}

%%% ----------------------------------------------------------------------

%%% ----------------------------------------------------------------------

%  {4.7}  ----------------------------------------------------------------------  {4.7}
Asadollahi and Schenzel proved that over local ring $(R, \fm)$, if
$X$ is a Cohen-Macaulay $R$-module and $t= \gr(\fa, X)$ then
$\Hom_{R}(R/\fa, H^{t+1}_{\fa}(X))$ is finite if and only if
$\Ext^{2}_{R}(R/\fa, H^{t}_{\fa}(X))$ is finite (see \cite [Theorem
1.2]{ASc}). Dibaei and Yassemi, in \cite{DY1}, generalized this
result with weaker assumptions on $R$ and $X$. As an immediate
consequence of Theorem \ref {4.6}, the following is a generalization
of \cite [Theorem B]{DY1}.

\begin{cor} \label {4.7} {\rm(cf.} \cite [Theorem B]{DY1}{\rm)} Let $X$ be an $R$--module and
$n$ be a non-negative integer. Assume also that
$\emph{\Ext}^{n+1}_{R}(N, X)$ and $\emph{\Ext}^{n+2}_{R}(N, X)$ are
finite {\rm(}resp. weakly Laskerian{\rm)}, and $H^{i}_{\fa}(X)$ is
$\fa$--cofinite {\rm(}resp. $\fa$--weakly cofinite{\rm)} for all
$i$, $0\leq i< n.$ Then the following statements are equivalent.
              \begin{itemize}
                   \item[(i)]{$\emph{\Hom}_{R}(N, H^{n+1}_{\fa}(X))$ is finite {\rm(}resp. weakly Laskerian{\rm)}.}
                   \item[(ii)]{$\emph{\Ext}^{2}_{R}(N, H^{n}_{\fa}(X))$ is finite {\rm(}resp. weakly Laskerian{\rm)}.}
               \end{itemize}
\end{cor}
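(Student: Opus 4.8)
The plan is to obtain Corollary \ref{4.7} as a direct specialization of Theorem \ref{4.6}, reading off each of the two stated versions by an appropriate choice of the Serre subcategory $\mathcal{S}$. For the ``finite'' version I would set $\mathcal{S} = \mathcal{C}_{f.g}(R)$, and for the ``weakly Laskerian'' version I would set $\mathcal{S} = \mathcal{C}_{w.l}(R)$. Both of these are Serre subcategories of $\mathcal{C}(R)$: finiteness is plainly preserved under passing to submodules, quotients and extensions, and the weakly Laskerian modules are known to enjoy the same three closure properties (see \cite[Definition 2.1]{DM1}).

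With $\mathcal{S}$ fixed as above, the next step is to translate the hypotheses of the corollary into the hypotheses of Theorem \ref{4.6}. By the remark immediately following Definition \ref{4.1}, a module is $(\mathcal{C}_{f.g}(R), \fa)$--cofinite exactly when it is $\fa$--cofinite, and $(\mathcal{C}_{w.l}(R), \fa)$--cofinite exactly when it is $\fa$--weakly cofinite; hence the assumption that $H^{i}_{\fa}(X)$ be $\fa$--cofinite (resp. $\fa$--weakly cofinite) for $0 \leq i < n$ is precisely the condition that $H^{i}_{\fa}(X)$ be $(\mathcal{S}, \fa)$--cofinite for $0 \leq i < n$ appearing in Theorem \ref{4.6}. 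Similarly, the requirement that $\Ext^{n+1}_{R}(N, X)$ and $\Ext^{n+2}_{R}(N, X)$ be finite (resp. weakly Laskerian) is exactly the requirement that these two modules lie in $\mathcal{S}$.

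It then remains only to read off the conclusion. The two equivalent statements of Theorem \ref{4.6}, namely that $\Hom_{R}(N, H^{n+1}_{\fa}(X))$ lies in $\mathcal{S}$ and that $\Ext^{2}_{R}(N, H^{n}_{\fa}(X))$ lies in $\mathcal{S}$, become statements (i) and (ii) of the corollary once $\mathcal{S}$ is interpreted as finiteness or weak Laskerianness. Applying Theorem \ref{4.6} once with $\mathcal{S} = \mathcal{C}_{f.g}(R)$ and once with $\mathcal{S} = \mathcal{C}_{w.l}(R)$ therefore yields both asserted equivalences. Because the whole argument is a verbatim specialization, I do not expect any genuine obstacle; the only point needing care is the standard verification that $\mathcal{C}_{w.l}(R)$ really is closed under extensions, this being the one closure property for weakly Laskerian modules that is not completely immediate.
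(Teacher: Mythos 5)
Your proposal is correct and is exactly the paper's (implicit) argument: the paper presents Corollary \ref{4.7} as an immediate consequence of Theorem \ref{4.6}, obtained by taking $\mathcal{S}=\mathcal{C}_{f.g}(R)$ for the finite case and $\mathcal{S}=\mathcal{C}_{w.l}(R)$ for the weakly Laskerian case, with the identification of $(\mathcal{S},\fa)$--cofiniteness with $\fa$--cofiniteness (resp. $\fa$--weak cofiniteness) coming from the remark after Definition \ref{4.1}. Your additional care about $\mathcal{C}_{w.l}(R)$ being a Serre subcategory is a reasonable check, and it is consistent with how the paper uses this class throughout.
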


%%% ----------------------------------------------------------------------

%%% ----------------------------------------------------------------------

%  {4.8}  ----------------------------------------------------------------------  {4.8}
In \cite [Proposition 2] {DM}, Delfino and Marley proved the Change
of ring principle for cofiniteness. In the following theorem, we
prove it for Serre cofiniteness. The proof is an adaption of the
proof of \cite [Proposition 2] {DM}.

\begin{thm} \label {4.8} Let $\phi: A\longrightarrow B$ be a
homomorphism between Noetherian rings such that $B$ is a finite
$A$--module, $\fa$ be an ideal of $A$ and $X$ be a $B$--module. Let
$\mathcal{S}$ and $\mathcal{T}$ be Serre subcategories of
$\mathcal{C}(A)$ and $\mathcal{C}(B)$, respectively. Assume also
that for any $B$--module $Y$, $Y$ is in $\mathcal{T}$ exactly when
$Y$ is in $\mathcal{S}$ \emph{(}as an $A$--module\emph{)}. Then $X$
is $(\mathcal{T}, \fa B)$--cofinite if and only if $X$ is
$(\mathcal{S}, \fa)$--cofinite \emph{(}as an $A$--module\emph{)}.
\end{thm}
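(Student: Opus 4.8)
The plan is to verify the two defining requirements of cofiniteness---the support condition and the $\Ext$--membership condition---and to transfer each across $\phi$ in both directions. First I would settle the support condition. Since $B$ is a finite $A$--module, $\Spec B\to\Spec A$ is a finite morphism and $(\fa B)^n=\fa^n B$, so for the $B$--module $X$ we have $\fa^n x=0$ exactly when $(\fa B)^n x=0$; hence $X$ is $\fa$--torsion over $A$ if and only if it is $\fa B$--torsion over $B$, i.e. $\Supp_A X\subseteq V(\fa)$ if and only if $\Supp_B X\subseteq V(\fa B)$. I would also note that, $X$ being a $B$--module, $\ker\phi=\Ann_A B$ kills $X$, so $\Supp_A X\subseteq\Supp_A B$ and therefore (once the support condition holds) $\Supp_A X\subseteq V(\fa)\cap\Supp_A B=\Supp_A(B/\fa B)$; this says that over $A$ the test modules $A/\fa$ and $B/\fa B$ both see all of $\Supp_A X$, which is the geometric reason the two cofiniteness notions should agree.

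What remains is the equivalence of ``$\Ext^i_A(A/\fa,X)\in\mathcal{S}$ for all $i$'' and ``$\Ext^i_B(B/\fa B,X)\in\mathcal{T}$ for all $i$.'' Using the hypothesis that a $B$--module lies in $\mathcal{T}$ precisely when it lies in $\mathcal{S}$ as an $A$--module, the second condition becomes ``$\Ext^i_B(B/\fa B,X)\in\mathcal{S}$ for all $i$,'' and I would combine this with Melkersson's lemma (cf. \cite{Mel}, \cite{AM1}) on each ring, which lets $A/\fa$ and $B/\fa B$ be replaced by any finite module of the same support when testing membership of all the $\Ext^i$ in a Serre subcategory. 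The bridge is then a change of rings spectral sequence (see \cite{Rot})
$$E_2^{p,q}=\Ext^p_B(\Tor^A_q(B,A/\fa),X)_{\stackrel{\Longrightarrow}{p}}\Ext^{p+q}_A(A/\fa,X),$$
in which each $\Tor^A_q(B,A/\fa)$ is a finite $B$--module supported in $V(\fa B)$. The implication ``$(\mathcal{T},\fa B)$--cofinite $\Rightarrow(\mathcal{S},\fa)$--cofinite'' drops out at once: by Melkersson's lemma over $B$ together with the hypothesis relating $\mathcal{T}$ and $\mathcal{S}$, all the $E_2^{p,q}$ lie in $\mathcal{S}$, and since $\mathcal{S}$ is closed under subquotients and extensions the finite filtration of the abutment forces $\Ext^{n}_A(A/\fa,X)\in\mathcal{S}$ for every $n$.

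The reverse implication is where I expect the main obstacle. The spectral sequence cannot be read backwards: membership of the abutment in $\mathcal{S}$ does not by itself place the bottom row $E_2^{p,0}=\Ext^p_B(B/\fa B,X)$ in $\mathcal{S}$, since extracting that row entails controlling the higher terms $\Tor^A_q(B,A/\fa)$ for $q\geq1$ (equivalently the correction terms $\Ext^q_A(B,X)$ in the companion sequence $\Ext^p_B(B/\fa B,\Ext^q_A(B,X))\Rightarrow\Ext^{p+q}_A(B/\fa B,X)$). A helpful reduction is that the $B$--module $X$ is a direct summand of $\Hom_A(B,X)$ via $x\mapsto(b\mapsto bx)$, split by $f\mapsto f(1)$, so $\Ext^p_B(B/\fa B,X)$ is a summand of the $q=0$ edge term $\Ext^p_B(B/\fa B,\Hom_A(B,X))$; it therefore suffices to control that single term. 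To do so I would, following \cite{DM}, factor $\phi$ as a flat polynomial extension $A\to P$ followed by a surjection $P\to B$: over the flat map the $\Tor$--terms vanish and the spectral sequence degenerates to an isomorphism $\Ext^i_A(A/\fa,X)\cong\Ext^i_P(P/\fa P,X)$, reducing everything to the surjective case, where one uses that $X$ is killed by $\ker(P\to B)$ and is $\fa$--torsion. The delicate point---and the crux of the whole theorem---is to show that these correction terms genuinely stay in $\mathcal{S}$, rather than only being filtered colimits $\varinjlim_n\Ext^q_A(B/\fa^n B,X)$ of $\mathcal{S}$--modules, a colimit that an arbitrary Serre subcategory need not contain.
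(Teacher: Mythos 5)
Your support transfer and your forward implication are essentially the paper's own argument: the same change-of-rings spectral sequence, the same use of the change-of-test-module lemma \cite[Proposition 3.4]{HV} over $B$ together with the $\mathcal{S}$/$\mathcal{T}$ correspondence to place every $E_2^{p,q}=\Ext^p_B(\Tor^A_q(B,A/\fa),X)$ in $\mathcal{S}$, and the finite filtration of the abutment. But the reverse implication --- which you yourself flag as ``the crux of the whole theorem'' --- is left genuinely open, and the reductions you sketch do not repair it. The claimed $B$-module splitting of $\iota\colon X\to\Hom_A(B,X)$, $x\mapsto(b\mapsto bx)$, by $f\mapsto f(1)$ fails: evaluation at $1$ is $A$-linear but not $B$-linear (it commutes with the $B$-action only on $B$-linear $f$), and in general $X$ is not a $B$-direct summand of $\Hom_A(B,X)$; for instance with $A=k$, $B=k[x]/(x^2)$, $X=B/(x)$, one has $\Hom_k(B,k)\cong B$ as a $B$-module, which is indecomposable, so $X$ is not a summand. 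The Delfino--Marley factorization through a flat polynomial ring is legitimate but only relocates the problem to the surjective case, where you still face exactly the difficulty you name and do not resolve.

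The idea you are missing is that the spectral sequence never has to be inverted globally: the bottom row can be recovered by a double induction, on $n$ and descending on the page index. Induct on $n$, assuming $\Ext^p_B(B/\fa B,X)\in\mathcal{T}$ for all $p\le n-1$ (the case $n=0$ is the isomorphism $\Hom_B(B/\fa B,X)\cong\Hom_A(A/\fa,X)$). Since the bottom row supports no outgoing differentials, $E^{n,0}_{r+1}\cong E^{n,0}_r/\im\bigl(E^{n-r,r-1}_r\to E^{n,0}_r\bigr)$ for every $r\ge 2$. The incoming term is a subquotient of $E^{n-r,r-1}_2=\Ext^{n-r}_B(\Tor^A_{r-1}(B,A/\fa),X)$, and $n-r\le n-2$, so the induction hypothesis combined with \cite[Proposition 3.4]{HV} (applied to the finite $B$-module $\Tor^A_{r-1}(B,A/\fa)$, which is supported in $V(\fa B)$) puts it in $\mathcal{T}$; hence $E^{n,0}_r\in\mathcal{T}$ whenever $E^{n,0}_{r+1}\in\mathcal{T}$, by closure under extensions. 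Finally, $E^{n,0}_\infty=E^{n,0}_{n+2}$ is the last filtration piece $\phi^nH^n$ of the abutment $\Ext^n_A(A/\fa,X)$; the latter is naturally a $B$-module (through $X$) lying in $\mathcal{S}$ as an $A$-module, hence in $\mathcal{T}$, and $\mathcal{T}$ is closed under submodules. Descending from $r=n+2$ to $r=2$ now gives $\Ext^n_B(B/\fa B,X)\in\mathcal{T}$. No colimits and no factorization of $\phi$ are needed: the only external input is the same lemma \cite[Proposition 3.4]{HV} that you already invoke in the forward direction, now fed by the inductive hypothesis on lower cohomological degrees rather than by full cofiniteness.
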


\begin{proof} By \cite [Theorem 11.65] {Rot}, there is a Grothendieck spectral sequence
$$E^{p, q}_{2}:= \Ext^{p}_{B}(\Tor^{A}_{q}(B, A/\fa), X)
_{\stackrel{\Longrightarrow}{p}} \Ext^{p+ q}_{A}(A/\fa, X).$$

$(\Rightarrow )$. For all $p$ and $q$, by \cite [Proposition 3.4]
{HV}, $E^{p, q}_{2}$ is in $\mathcal{S}$. Therefore $E^{p,
q}_{\infty}$ belongs to $\mathcal{S}$ since $E^{p, q}_{\infty}=
E^{p, q}_{p+ q+ 2}$ and $E^{p, q}_{p+ q+ 2}$ is a subquotient of
$E^{p, q}_{2}$. Let $n$ be a non-negative integer. There exists a
finite filtration
$$0= \phi^{n+1}H^{n}\subseteq \phi^{n}H^{n}\subseteq \cdots
 \subseteq \phi^{1}H^{n}\subseteq \phi^{0}H^{n}= \Ext^{n}_{A}(A/\fa, X)$$
such that $E_{\infty}^{n-i ,i}= \phi^{n-i}H^{n}/\phi^{n-i+1}H^{n}$
for all $i$, $0\leq i\leq n$. Now, by the exact sequences
$$0\longrightarrow \phi^{n-i+1}H^{n}\longrightarrow \phi^{n-i}H^{n}\longrightarrow E_{\infty}^{n-i ,i}\longrightarrow 0,$$
for all $i$, $0\leq i\leq n$, $\Ext^{n}_{A}(A/\fa, X)$ is in
$\mathcal{S}$.

$(\Leftarrow )$. By using induction on $n$, we show that $E^{n,
0}_{2}= \Ext^{n}_{B}(B/\fa B, X)$ is in $\mathcal{T}$ for all $n\geq
0$. The case $n= 0$ is clear from the isomorphism $\Hom_{B}(B/\fa B,
X)\cong \Hom_{A}(A/\fa, X)$. Assume that $n> 0$ and that $E^{p,
0}_{2}$ is in $\mathcal{T}$ for all $p$, $0\leq p\leq n- 1$. For all
$r\geq 2$, we have $E^{n, 0}_{r+ 1}\cong E^{n, 0}_{r}/\im(E^{n- r,
r- 1}_{r}\lo E^{n, 0}_{r})$. Thus $E^{n, 0}_{r}$ is in $\mathcal{T}$
whenever $E^{n, 0}_{r+ 1}$ is in $\mathcal{T}$ because $E^{n- r, r-
1}_{r}$ is in $\mathcal{T}$ by the induction hypotheses and \cite
[Proposition 3.4] {HV}. Since $E^{n, 0}_{\infty}= E^{n, 0}_{n+ 2}$,
to complete the proof it is enough to show that $E^{n, 0}_{\infty}$
is in $\mathcal{T}$. By assumption, $\Ext^{n}_{A}(A/\fa, X)$ is in
$\mathcal{T}$ and hence $\phi^{n}H^{n}$ is in $\mathcal{T}$. That is
$E^{n, 0}_{\infty}$ belongs to $\mathcal{T}$ as desired.
\end{proof}

%%% ----------------------------------------------------------------------

%%% ----------------------------------------------------------------------

%  {4.9}  ----------------------------------------------------------------------  {4.9}
\begin{defn} \label {4.9} {\rm(}see \cite {Z}{\rm)} The $R$--module $X$ is a {\it minimax module}
if it has a finite submodule $X'$ such that $X/X'$ is Artinian.
\end{defn}

The class of minimax modules thus includes all finite and all
Artinian modules. Note that the category of minimax modules and the
category of $\fa$--cofinite minimax modules are two Serre
subcategories of the category of $R$--modules (see \cite [Corollary
4.4]{Mel}).

%%% ----------------------------------------------------------------------

%%% ----------------------------------------------------------------------

%  {4.10}  ----------------------------------------------------------------------  {4.10}
\begin{prop} \label {4.10} Let $X$ be an $R$--module and $n, m$ be non-negative integers
such that $n\leq m$. Assume also that
               \begin{itemize}
                   \item[(i)]{$H^{i}_{\fa}(X)$ is $\fa$--cofinite for all $i$, $0\leq i\leq n-1$,}
                   \item[(ii)]{$\emph{\Ext}^{i}_{R}(R/\fa, X)$ is finite for all $i$, $n\leq i\leq m$, and}
                   \item[(iii)]{$H^{i}_{\fa}(X)$ is minimax for all $i$, $n\leq i\leq m$.}
               \end{itemize}
Then $H^{i}_{\fa}(X)$ is $\fa$--cofinite for all $i$, $0\leq i\leq
m$.
\end{prop}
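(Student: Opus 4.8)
The plan is to prove the conclusion by induction on $i$ running through $n\le i\le m$, the range $0\le i\le n-1$ being already covered by hypothesis (i). The induction is arranged so that, when treating a fixed index $i$, every $H^{j}_{\fa}(X)$ with $0\le j\le i-1$ is known to be $\fa$--cofinite: for $j\le n-1$ this is hypothesis (i), while for $n\le j\le i-1$ it is the output of the earlier steps. Thus there is no separate base case to isolate; the step is uniform.

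The engine of each step is Corollary \ref{2.4}, applied with $N=R/\fa$, $\mathcal{S}=\mathcal{C}_{f.g}(R)$, and with the corollary's index taken to be the current $i$. Its hypothesis (i) demands that $\Ext^{i}_{R}(R/\fa,X)$ be finite, which is exactly our hypothesis (ii) since $n\le i\le m$; its hypothesis (ii) demands that $\Ext^{i+1-j}_{R}(R/\fa,H^{j}_{\fa}(X))$ be finite for $0\le j<i$, and this holds because each such $H^{j}_{\fa}(X)$ is $\fa$--cofinite by the inductive set-up. The corollary then produces the finiteness of $\Hom_{R}(R/\fa,H^{i}_{\fa}(X))$.

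It remains to upgrade this single finiteness to full $\fa$--cofiniteness of $H^{i}_{\fa}(X)$, and here hypothesis (iii) enters decisively: $H^{i}_{\fa}(X)$ is minimax and $\fa$--torsion (so its support already lies in $V(\fa)$), and we have just seen that $\Hom_{R}(R/\fa,H^{i}_{\fa}(X))=0:_{H^{i}_{\fa}(X)}\fa$ is finite. I would invoke the criterion that an $\fa$--torsion \emph{minimax} module $M$ is $\fa$--cofinite precisely when $\Hom_{R}(R/\fa,M)$ is finitely generated — the minimax counterpart of Melkersson's characterization in the Artinian $\fa$--cofinite case (cf. \cite{Mel}). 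Applied to $M=H^{i}_{\fa}(X)$ this yields that $H^{i}_{\fa}(X)$ is $\fa$--cofinite, which closes the induction and gives the assertion for all $0\le i\le m$.

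The main obstacle is exactly this last upgrade. A naive attempt to prove cofiniteness through Theorem \ref{2.3} alone cannot succeed: that theorem would require $\Ext^{i+j}_{R}(R/\fa,X)$ to be finite for every $j\ge 0$, whereas hypothesis (ii) controls $\Ext^{\bullet}_{R}(R/\fa,X)$ only up to degree $m$, so the infinitely many Ext-conditions defining $\fa$--cofiniteness are out of reach for large $j$. The minimax hypothesis (iii) is precisely what compensates, collapsing that infinite list to the single requirement that $\Hom_{R}(R/\fa,H^{i}_{\fa}(X))$ be finite. Consequently the one ingredient external to the spectral-sequence machinery of Section~2 — and the point deserving care — is the verification (or citation) of the minimax cofiniteness criterion, equivalently that the $\fa$--cofinite minimax modules inherit the Melkersson lifting property from $0:_{M}\fa$ to $M$ on minimax input.
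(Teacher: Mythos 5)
Your proof is correct and is essentially the paper's own argument: the paper applies Theorem \ref{2.3} with $s=0$ and $t$ equal to the current cohomological degree (which for $s=0$ is exactly Corollary \ref{2.4}, your engine) to get finiteness of $\Hom_{R}(R/\fa, H^{t}_{\fa}(X))$, and then upgrades to $\fa$--cofiniteness via Melkersson's minimax criterion \cite[Proposition 4.3]{Mel} --- precisely the criterion you invoke, so it is a citation rather than something needing verification. The only difference is presentational: you make the induction on $i$ from $n$ to $m$ explicit, whereas the paper writes out only the step at $t=n$ and leaves the iteration up to $m$ tacit.
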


\begin{proof} Apply Theorem \ref {2.3} with $s= 0$ and $t= n$ for $N= R/\fa$ and $\mathcal{S}= \mathcal{C}_{f.g}(R).$
It shows that $\Hom_{R}(R/\fa, H^{n}_{\fa}(X))$ is finite. Thus
$H^{n}_{\fa}(X)$ is $\fa$--cofinite from \cite [Proposition
4.3]{Mel}.
\end{proof}

%%% ----------------------------------------------------------------------

%%% ----------------------------------------------------------------------

%  {4.11}  ----------------------------------------------------------------------  {4.11}
\begin{cor} \label {4.11} {\rm(cf.} \cite [Theorem 2.3]{BN}{\rm)}
Let $X$ be an $R$--module and $n$ be a non-negative integer such
that
               \begin{itemize}
                   \item[(i)]{$H^{i}_{\fa}(X)$ is minimax for all $i$, $0\leq i\leq n-1$, and}
                   \item[(ii)]{$\emph{\Ext}^{i}_{R}(R/\fa, X)$ is finite for all $i$, $0\leq i\leq n.$}
               \end{itemize}
Then $\emph{\Hom}_{R}(R/\fa, H^{n}_{\fa}(X))$ is finite.
\end{cor}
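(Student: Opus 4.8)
The plan is to deduce the statement from Proposition~\ref{4.10} in two stages: first I would promote the minimax hypothesis on the lower local cohomology modules to genuine $\fa$--cofiniteness, and then I would read off the finiteness of $\Hom_R(R/\fa, H^n_{\fa}(X))$ from Theorem~\ref{2.3}. The case $n=0$ is immediate and can be treated at the outset, since $H^0_{\fa}(X)=\Gamma_{\fa}(X)$ and the natural isomorphism $\Hom_R(R/\fa,\Gamma_{\fa}(X))\cong\Hom_R(R/\fa,X)$, combined with hypothesis (ii) taken at $i=0$, already gives the conclusion. So I would assume $n\geq 1$ from here on.

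For the first stage I would apply Proposition~\ref{4.10} with its $n$ replaced by $0$ and its $m$ replaced by $n-1$ (legitimate since $n\geq 1$). Its first hypothesis then becomes vacuous; its second hypothesis requires $\Ext^i_R(R/\fa,X)$ to be finite for $0\leq i\leq n-1$, which is part of our hypothesis (ii); and its third hypothesis requires $H^i_{\fa}(X)$ to be minimax for $0\leq i\leq n-1$, which is exactly our hypothesis (i). The proposition therefore yields that $H^i_{\fa}(X)$ is $\fa$--cofinite for all $i$ with $0\leq i\leq n-1$.

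For the second stage I would invoke Theorem~\ref{2.3} with $N=R/\fa$, $\mathcal{S}=\mathcal{C}_{f.g}(R)$, $s=0$ and $t=n$. Condition (i) there asks that $\Ext^n_R(R/\fa,X)$ be finite, which is hypothesis (ii) at $i=n$; condition (ii) asks that $\Ext^{n+1-i}_R(R/\fa,H^i_{\fa}(X))$ be finite for $0\leq i<n$, which now holds because each such $H^i_{\fa}(X)$ is $\fa$--cofinite and hence, by the definition of $\fa$--cofiniteness, has all of its modules $\Ext^j_R(R/\fa,-)$ finite; and condition (iii) is vacuous, its index range $t+1\leq i<s+t$ being empty when $s=0$. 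Theorem~\ref{2.3} then places $\Ext^0_R(R/\fa,H^n_{\fa}(X))=\Hom_R(R/\fa,H^n_{\fa}(X))$ in $\mathcal{C}_{f.g}(R)$, i.e. shows it is finite. I note that this second stage is precisely the argument used in the proof of Proposition~\ref{4.10}, so it could equally well be quoted from there.

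I do not expect any genuine obstacle: the two inputs are already established and mesh exactly after the index shift. The only points demanding care are the bookkeeping of the re-indexing in the first stage and the verification that the flagged index ranges in conditions (i)--(iii) of Theorem~\ref{2.3} are really vacuous where claimed; both are routine.
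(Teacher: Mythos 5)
Your proposal is correct and follows essentially the same route as the paper: upgrade the minimax modules $H^{i}_{\fa}(X)$, $0\leq i\leq n-1$, to $\fa$--cofinite modules via Proposition~\ref{4.10}, then apply Theorem~\ref{2.3} with $N=R/\fa$, $s=0$, $t=n$ to conclude. The only (cosmetic) difference is that the paper seeds Proposition~\ref{4.10} by first quoting Melkersson's result to make $\Gamma_{\fa}(X)$ cofinite, whereas you apply Proposition~\ref{4.10} with its $n$ set to $0$ and $m=n-1$, so that its cofiniteness hypothesis is vacuous and the Melkersson step stays hidden inside that proposition's proof; your separate treatment of $n=0$ is a sensible precaution for this re-indexing.
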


\begin{proof} By \cite [Proposition 4.3]{Mel}, $\Gamma_{\fa}(X)$ is
$\fa$--cofinite. Hence $H^{i}_{\fa}(X)$ is $\fa$--cofinite for all
$i$, $0\leq i\leq n-1$, from Proposition \ref {4.10}. Thus, by
Theorem \ref {2.3}, $\Hom_{R}(R/\fa, H^{n}_{\fa}(X))$ is finite.
\end{proof}

%%% ----------------------------------------------------------------------

%%% ----------------------------------------------------------------------

%  {4.12}  ----------------------------------------------------------------------  {4.12}
\begin{cor} \label {4.12} Suppose that $X$ is an $R$--module and that $n$ is a non-negative
integer. Then the following statements are equivalent.
               \begin{itemize}
                   \item[(i)]{$H^{i}_{\fa}(X)$ is Artinian $\fa$--cofinite for all $i$, $0\leq i\leq n$.}
                   \item[(ii)]{$\emph{\Ext}^{i}_{R}(R/\fa, X)$ has finite length for all $i$, $0\leq i\leq n.$}
               \end{itemize}
\end{cor}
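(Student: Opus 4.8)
The plan is to realize Corollary~\ref{4.12} as a direct instance of Proposition~\ref{3.3}. First I would take $\mathcal{M}$ to be the class of all Artinian $\fa$--cofinite $R$--modules; this is a Melkersson subcategory with respect to $\fa$ by the result of Melkersson recalled after Definition~\ref{3.1} (namely \cite[Proposition~4.1]{Mel}). Proposition~\ref{3.3}, applied to this $\mathcal{M}$, already yields the equivalence of ``$H^{i}_{\fa}(X)\in\mathcal{M}$ for all $i$, $0\leq i\leq n$'' with ``$\Ext^{i}_{R}(R/\fa,X)\in\mathcal{M}$ for all $i$, $0\leq i\leq n$''. The first of these is literally statement (i), so all that remains is to identify the second membership condition with statement~(ii).

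The key step is the observation that every module $M:=\Ext^{i}_{R}(R/\fa,X)$ is annihilated by $\fa$, so that $\Hom_{R}(R/\fa,M)=(0:_{M}\fa)=M$. Using this I would show that, for such a module, lying in $\mathcal{M}$ is equivalent to having finite length. If $M$ has finite length it is finitely generated and $\fa$--torsion, hence supported in $V(\fa)$ with $\Ext^{j}_{R}(R/\fa,M)$ finite for all $j$, so $M$ is $\fa$--cofinite; being also Artinian, $M\in\mathcal{M}$. Conversely, if $M\in\mathcal{M}$, then $\fa$--cofiniteness makes $\Hom_{R}(R/\fa,M)$ finite, and since this Hom is all of $M$, the module $M$ is simultaneously finitely generated and Artinian, hence of finite length. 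Applying this equivalence for each $i$ with $0\leq i\leq n$ converts the second condition of Proposition~\ref{3.3} into statement~(ii), completing the argument.

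The step I expect to need the most care is the forward implication ``$M\in\mathcal{M}\Rightarrow M$ of finite length'', which is what makes (ii) look stronger than bare membership; it hinges entirely on $\fa M=0$, so that the single functor $\Hom_{R}(R/\fa,-)$ recovers $M$ itself. Without this annihilation the implication is false, as $E(R/\fm)$ over a local ring is Artinian and $\fm$--cofinite yet of infinite length. As an alternative not relying on Proposition~\ref{3.3}, one could prove (i)$\Rightarrow$(ii) by applying Theorem~\ref{2.1} with $N=R/\fa$ and $\mathcal{S}$ the Serre subcategory of finite--length modules, noting that $\Ext^{i-r}_{R}(R/\fa,H^{r}_{\fa}(X))$ is Artinian (as $\Ext^{i-r}_{R}$ of the finite module $R/\fa$ into the Artinian module $H^{r}_{\fa}(X)$) and finite (by $\fa$--cofiniteness of $H^{r}_{\fa}(X)$), hence of finite length; and prove (ii)$\Rightarrow$(i) by running the induction used in the proof of Proposition~\ref{3.3}. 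The Melkersson--subcategory route is shorter and is the one I would write up.
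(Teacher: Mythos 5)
Your proof is correct, and it takes a genuinely different route from the paper's. The paper argues the two implications separately: for (i)$\Rightarrow$(ii) it applies Theorem \ref{2.1} with $\mathcal{S}$ the class of finite-length modules, observing that each $\Ext^{t-i}_{R}(R/\fa, H^{i}_{\fa}(X))$ is finite (by cofiniteness) and Artinian (as $\Ext$ of a finite module into an Artinian module), hence of finite length --- which is exactly the alternative you sketch at the end; for (ii)$\Rightarrow$(i) it first applies Proposition \ref{3.3} with $\mathcal{M}$ the class of Artinian modules to conclude that each $H^{i}_{\fa}(X)$ is Artinian, then invokes Corollary \ref{4.11} to get that $\Hom_{R}(R/\fa, H^{t}_{\fa}(X))$ is finite (hence of finite length), and finally Melkersson's criterion \cite[Proposition 4.3]{Mel} for minimax torsion modules to conclude cofiniteness. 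Your argument instead makes a single application of Proposition \ref{3.3}, with $\mathcal{M}$ the class of Artinian $\fa$--cofinite modules, and pushes all remaining content into the translation lemma: a module $M$ with $\fa M=0$ lies in $\mathcal{M}$ if and only if it has finite length, via the identity $\Hom_{R}(R/\fa,M)=M$. This is shorter and more symmetric, and it exhibits the corollary as a purely formal instance of the Melkersson machinery. The one point you should make explicit is that the Artinian $\fa$--cofinite modules form a \emph{Serre} subcategory, not merely that they satisfy the closure condition of Definition \ref{3.1}: the remark after that definition which you cite records only the Melkersson property (\cite[Proposition 4.1]{Mel}), whereas closure under extensions, submodules and quotients is what licenses the use of Proposition \ref{3.3}. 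This is true --- the class is the intersection of the Artinian modules with the $\fa$--cofinite minimax modules, which is Serre by \cite[Corollary 4.4]{Mel}, and it appears among the examples in \cite[Examples 2.4 and 2.5]{AM1} --- but it deserves a sentence. The paper's route avoids this issue by using only the Artinian class inside Proposition \ref{3.3} and recovering cofiniteness from Melkersson's criterion, at the cost of a longer chain of dependencies (Corollary \ref{4.11}, which in turn rests on Proposition \ref{4.10}).
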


\begin{proof}
(i) $\Rightarrow$ (ii). Let $0\leq t\leq n.$ Since
$\Ext^{t-i}_{R}(R/\fa, H^{i}_{\fa}(X))$ has finite length for all
$i$, $0\leq i\leq t,$ $\Ext^{t}_{R}(R/\fa, X)$ has also finite
length by Theorem \ref {2.1}.

(ii) $\Rightarrow$ (i). By Proposition \ref {3.3}, $H^{i}_{\fa}(X)$
is Artinian for all $i$, $0\leq i\leq n.$ Let $0\leq t\leq n$ and
consider Corollary \ref {4.11}. It shows that $\Hom_{R}(R/\fa,
H^{t}_{\fa}(X))$ is finite and so has finite length. Now, the
assertion follows from \cite [Proposition 4.3]{Mel}.
\end{proof}

%%% ----------------------------------------------------------------------

%%% ----------------------------------------------------------------------

%%% ----------------------------------------------------------------------
%%% ----------------------------------------------------------------------
%%% ----------------------------------------------------------------------
\bibliographystyle{amsplain}
%%% ----------------------------------------------------------------------
%%% ----------------------------------------------------------------------
%%% ----------------------------------------------------------------------

%%% ----------------------------------------------------------------------
%%% ----------------------------------------------------------------------
%%% ----------------------------------------------------------------------
\end{document}